
\documentclass[3p,review]{elsarticle}




\usepackage{amssymb}
\usepackage{amsthm}




\biboptions{square}

\journal{t.b.a.}

\usepackage{url}
\usepackage{dsfont}
\usepackage{amsmath}
\usepackage{verbatim}
\usepackage{color}

\newcommand{\vica}{\color{blue}}

\renewcommand{\leq}{\leqslant}
\renewcommand{\geq}{\geqslant}
\renewcommand{\Re}{\ensuremath{\operatorname{Re}}}
\renewcommand{\Im}{\ensuremath{\operatorname{Im}}}

\newcommand{\comp}{\mathds C}

\newcommand{\real}{\mathds{R}}

\newcommand{\Ee}{\mathds E}
\newcommand{\Pp}{\mathds P}
\newcommand{\I}{\mathds 1}

\newcommand{\psip}{p^{\mathsf U}}

\newcommand{\entier}[1]{\lfloor#1\rfloor}

\newtheorem{theorem}{Theorem}
\newtheorem{lemma}[theorem]{Lemma}
\newtheorem{proposition}[theorem]{Proposition}

\theoremstyle{definition}
\newtheorem{remark}[theorem]{Remark}
\newtheorem{definition}[theorem]{Definition}

\newtheorem{example}[theorem]{Example}

\begin{document}

\begin{frontmatter}



\title{On the small-time behaviour of L\'evy-type processes}

\author[kiev,dresden]{Victoria Knopova}
\author[dresden]{Ren\'e L.\ Schilling}

\address[kiev]{V.M.\ Glushkov Institute of Cybernetics, National Academy of Sciences of Ukraine, 40 Acad.\ Glushkov Ave., 03187 Kiev, Ukraine. \url{vicknopova@googlemail.com}}
\address[dresden]{Institut f\"ur Mathematische Stochastik, Technische Universit\"at Dresden, 01062 Dresden, Germany. \url{rene.schilling@tu-dresden.de}}

\begin{abstract}
    We show some Chung-type $\liminf$ law of the iterated logarithm results at zero for a class of (pure-jump) Feller or L\'evy-type processes. This class includes all L\'evy processes. The norming function is given in terms of the symbol of the infinitesimal generator of the process. In the L\'evy case, the symbol coincides with the characteristic exponent.
\end{abstract}

\begin{keyword}
    Law of the Iterated Logarithm\sep
    small-time asymptotic\sep
    L\'evy process\sep
    Feller process\sep
    L\'evy-type process\sep
    symbol\sep
    pseudo differential operator\sep
    stochastic differential equation.

    \MSC[2010] 60F15 \sep 60G51 \sep 60J35 \sep 60J75
\end{keyword}
\end{frontmatter}



\section{Introduction}\label{intro}
We study the short-time behaviour of a class of one-dimensional Feller processes $(X_t)_{t\geq 0}$. To do so we identify suitable norming functions $u,v,w$ such that the following Chung-type LIL (law of the iterated logarithm) assertions hold $\Pp^x$-almost surely:
\begin{gather}
\label{intro-e01}
    \varliminf_{t\to 0} \frac{\sup_{0\leq s\leq t}|X_s-x|}{u^{-1}(x,{t}/{\log|\log t|})}=C(x),\\
\label{intro-e02}
    \varlimsup_{t\to 0} \frac{\sup_{s\leq t}|X_s-x|}{v(t,x)}=0 \quad\text{or\ } =+\infty,\\
\label{intro-e03}
    \varliminf_{t\to 0} \frac{|X_t-x|}{w(t,x)} = \gamma(x)>0 \quad\text{or}\quad =+\infty.
\end{gather}
Assertions of this kind are classical for Brownian motion, the corresponding results for L\'evy processes are due to Dupuis \cite{Du74} and Aurzada, D\"{o}ring and Savov \cite{ADS12}. The class of Feller processes considered in this paper includes L\'evy processes and extends the results of these authors. We will characterize the norming functions with the help of the symbol of the infinitesimal generator of the Feller process. In the case of a L\'evy process this becomes a rather simple criterion in terms of the characteristic exponent of the process.

\paragraph{L\'evy processes}
A (real-valued) L\'evy process $(X_t)_{t\geq 0}$ is a stochastic process with stationary and independent increments and c\`adl\`ag (right continuous with finite left limits) sample paths. The transition function is uniquely determined through the characteristic function which is of the following form:
$$
   \lambda_t(x,\xi):= \Ee^x e^{i\xi (X_t - x)}
    = \Ee^0 e^{i\xi X_t}
    = e^{-t\psi(\xi)},
    \quad t\geq 0,\quad  \xi\in\real.
$$
The \emph{characteristic exponent} $\psi:\real\to\comp$ is given by the \emph{L\'evy-Khintchine formula}
\begin{equation}\label{intro-e05}
    \psi(\xi)
    = i l\xi + \tfrac 12\,\sigma^2\xi^2 + \int_{\real\setminus\{0\}} \big(1-e^{iy\xi}+iy\xi\I_{(0,1]}(|y|)\big)\,\nu(dy)
\end{equation}
and the \emph{L\'evy triplet} $(l,\sigma^2,\nu)$ where $\nu$ is a measure on $\real\setminus\{0\}$ such that $\int_{y\neq 0} (1\wedge y^2)\,\nu(dy) < \infty$, and $l\in\real, \sigma\geq 0$. The characteristic exponent is also the symbol of the infinitesimal generator $A$ of the L\'evy process:
\begin{equation*}
    Au(x)
    = -\psi(D)u(x)
    := -\int_\real e^{ix\xi} \hat u(\xi)\,\psi(\xi)\,d\xi,\quad u\in C_c^\infty(\real),
\end{equation*}
where $\hat u(\xi) = (2\pi)^{-1} \int_\real u(x) e^{-ix\xi}\,dx$ denotes the Fourier transform of $u$.

\paragraph{Feller processes}
The generator  of a L\'evy process has \emph{constant coefficients}: it does not depend on the state space variable $x$. This is due to the fact that a L\'evy process is spatially homogeneous which means that the transition semigroup $P_tu(x) = \Ee^x u(X_t) = \Ee u(X_t+x)$ is given by convolution operators. We are naturally led to \emph{Feller processes} if we give up spatial homogeneity.
\begin{definition}\label{intro-10}
    A (one-dimensional) \emph{Feller process} is a real-valued Markov process $(X_t)_{t\geq 0}$ whose transition semigroup
    $P_t u(x) := \Ee^x u(X_t)$, $u\in B_b(\real)$, is a \emph{Feller semigroup}, i.e.
    \begin{enumerate}[a)]
    \item $P_t$ is Markovian: if $u\in B_b(\real), u\geq 0$ then $P_t u\geq 0$ and $P_t 1 = 1$;
    \item $P_t$ maps $C_\infty(\real) := \big\{u\in C(\real)\,:\, \lim_{|x|\to\infty} u(x) = 0\big\}$ into itself;
    \item $P_t$ is a strongly continuous contraction semigroup in $(C_\infty(\real),\|\cdot\|_\infty)$.
    \end{enumerate}
\end{definition}
Every L\'evy process is a Feller process.

Write $(A,D(A))$ for the generator of the Feller semigroup. If $C_c^\infty(\real)\subset D(A)$, then
\begin{equation*}
    Au(x)
    = -p(x,D)u(x)
    := -\int_\real e^{ix\xi} \hat u(\xi)\,p(x,\xi)\,d\xi,\quad u\in C_c^\infty(\real),
\end{equation*}
see e.g.\ \cite[Vol.~1, Theorem~4.5.21, p.~360]{Jac}; this means that $A$ is a pseudo differential operator whose symbol $p:\real\times\real\to\comp$ is such that for every fixed $x$ the function $\xi\mapsto p(x,\xi)$ is the characteristic exponent of a L\'evy process
\begin{equation}\label{intro-e25}
    p(x,\xi)
    = i l(x)\xi + \tfrac 12\, \sigma^2 (x)\xi^2 + \int_{\real\setminus \{0\}} \big( 1- e^{i \xi y}+ i\xi y\I_{(0,1]}(|y|) \big)\, \nu(x,dy).
\end{equation}
The L\'evy triplet $(l(x),\sigma^2(x),\nu(x,dy))$ now depends on the state space, i.e.\ the generator is an operator with variable `coefficients'. Typical examples are elliptic diffusions where the symbol (in one dimension) is of the form $p(x,\xi) = \frac 12 \sigma^2(x)\xi^2$ and stable-like processes where $p(x,\xi) = |\xi|^{\alpha(x)}$ with $0<\alpha_0\leq \alpha(x)\leq \alpha_1 < 2$ is Lipschitz continuous, cf.\ \cite{bass}. For further details we refer to \cite{Jac} or \cite{jac-sch-survey}.

The symbol $p(x,\xi)$ plays very much the same role as the characteristic exponent of a L\'evy process and it is possible to use $p(x,\xi)$ to describe the path behaviour of a Feller process, for example \cite{Sch98}, \cite{jac-sch-survey} or \cite{SW12}. Note however that, due to the lack of spatial homogeneity, $p(x,\xi)$ is not the exponent of the characteristic function, i.e.
\begin{equation*}
    \lambda_t(x,\xi)
    = \Ee^x e^{i(X_t-x)\xi}  \neq e^{-tp(x,\xi)}.
\end{equation*}

\paragraph{A brief overview of LIL-type results}
 For a general L\'evy process the first result is due to Khintchine \cite{Kh39}, cf.\ \cite{Kh36} for the Brownian LIL. Khintchine provides a necessary and sufficient criterion for a positive increasing function $u: (0,\epsilon)\to (0,\infty)$ to be the upper function for a one-dimensional L\'evy process $(X_t)_{t\geq 0}$ without Gaussian component:
\begin{equation}\label{intro-e50}
    \varlimsup_{t\to 0} \frac{|X_t|}{u(t)} \leq c
    \quad\Pp^0\text{-a.s.\ if, and only if,}\quad
    \int_{0+} \frac{\Pp^0 \{ |X_t|> c u(t)\}}{t}\,dt<\infty.
\end{equation}
As usual, we indicate by $\int_{0+}\ldots$ that the integral converges at the origin. For a Brownian motion this result is sharp with $u(t)=\sqrt{t \log|\log t|}$.

Khintchine's result is generalized by the following integral test due to Savov \cite{Sa09}. Let $N(t):= \int_{|x|>t} \nu(dx)$ and $b(t)$ be a function which satisfies some mild growth assumptions. Then
\begin{align*}
    \int_0^1 N(b(t))dt<\infty\quad\text{or\ } =+\infty
    \implies
    \varlimsup_{t\to 0} \frac{|X_t|}{b(t)}=\lambda(b) \quad\text{or\ } =+\infty.
\end{align*}

The first Chung-type LIL for ($n$-dimensional $\alpha$-stable) L\'evy processes is due to Taylor \cite{Ta67}. If $0<\alpha<n$ and if the transition density satisfies $p_t(0)>0$, then \eqref{intro-e01} holds with $u^{-1}(x,t) = t^{1/\alpha}$ and $C(x)=C$.
Pruitt and Taylor \cite{PT69} extended this result for L\'evy processes with independent stable components.
Based on \cite{Ha71}, Fristedt and Pruitt \cite{FP71,FP72}
prove a LIL for subordinators (one-sided increasing L\'evy processes), where the upper function is determined by the Laplace exponent of the process.
Dupuis \cite{Du74} extends these results for \emph{symmetric} L\'evy processes, with $u^{-1}(t):= 1/\psi^U (1/t)$, where $\psi^U(\xi)= \int_{y\neq 0} \min\{ 1, |\xi u|^2\}\nu(du)$.
 Using a different approach, this result  was independently rediscovered by Aurzada--D\"oring--Savov \cite{ADS12}.

\section{A Chung-type $\liminf$ LIL for Feller processes}\label{sec-chung}

Consider a one-dimensional Feller process $(X_t)_{t\geq 0}$ with symbol $p(x,\xi)$ of the form \eqref{intro-e25}. Throughout we assume:
\begin{gather}\tag{\bfseries A1}\label{A1}\begin{gathered}
    C_c^\infty(\real)\quad\text{is in the domain of the infinitesimal generator};\\
    x\mapsto p(x,\xi)\quad\text{is continuous and has no diffusion part:\ \ } \sigma^2\equiv 0; \\
    \textit{sector condition:\ \ }
    \exists c_0\in (0,\infty)\quad
    \forall x,\xi\in\real
    \::\: |\Im  p(x,\xi)|\leq c_0 \Re  p(x,\xi).
\end{gathered}\end{gather}
Define the function
\begin{equation}\label{ps}
    \psip(x,\xi):= \int_{y\neq 0} \min\{|\xi y|^2, \, 1\}\,\nu(x,dy).
\end{equation}
It is not hard to see that $|p(x,\xi)| \leq 2\psip(x,\xi)$ and
$\psip(x,2\xi)\leq 4\psip(x,\xi)$ for all $x, \xi \in \real$.
We will also need the following regularity assumptions:
\begin{gather}
\tag{\bfseries A2}\label{A5}
    \exists \kappa(x)>1\quad\forall R\leq 1 \::\:
    \sup_{|x-y|\leq     2R   } \psip\big(y,\tfrac{1}{R}\big)
    \leq \kappa(x)  \inf_{|x-y|\leq    3 R   } \psip\big(y,\tfrac{1}{R}\big);\\
\tag{\bfseries A3}\label{A6}
    \exists t_0\in (0,1)\;\quad \exists q=q(x)\in (0,1)\;\quad \forall R\in (0,1],\; y\in B(x,R),\; t\in [0,t_0]\::\: \Pp^y(X_t < y) \leq  q.
\end{gather}
For example \eqref{A6} holds (even with equality) with $q=1/2$ if $\lambda_t(x,\xi) = \Ee^x e^{i\xi(X_t-x)}$ is real-valued. For L\'evy processes which are not compound Poisson processes  \eqref{A6}  follows from $\lim_{t\to 0}\Pp(X_t>0) =\rho\in (0,1)$ (Spitzer's condition); see \cite[Chaper 7]{Doney} for the necessary and sufficient conditions in the L\'evy case. Set
\begin{equation}\label{chung-e05}
    u\equiv u(x,R):= \frac{1}{\inf_{|x-y|\leq 3 R} \psip\big(y,\tfrac{1}{R}\big)}, \quad R\in (0,1],
\end{equation}
and denote by $u^{-1}(x,\rho) := \inf\{ r:\,\, u(x,r)\geq \rho \}$ the generalized inverse of $R\mapsto u(x,R)$.

We can now state the main result of this section.
\begin{theorem}\label{chung-10}
    Let $(X_t)_{t\geq 0}$ be a one-dimensional Feller process with symbol $p(x,\xi)$ satisfying \eqref{A1}--\eqref{A6}. Then there exists a constant $C(x)>0$ such that
    \begin{equation}\label{chung-e10}
        \varliminf_{t\to 0} \frac{\sup_{0\leq s\leq t}|X_s-x|}{u^{-1}(x,{t}/{\log|\log t|})}=C(x)
        \qquad(\Pp^x\text{-a.s.})
    \end{equation}
    where $u^{-1}$ is the generalized inverse of the function $R\mapsto u(x,R)$ defined in \eqref{chung-e05}.
\end{theorem}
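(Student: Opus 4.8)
The plan is to follow the classical Chung scheme (Taylor~\cite{Ta67}, Dupuis~\cite{Du74}, Aurzada--D\"oring--Savov~\cite{ADS12}); the point is that the Fourier-analytic computation of the staying probability that drives the L\'evy-process proofs is unavailable, since $p(x,\xi)$ is \emph{not} the characteristic exponent of $X_t$, and must be replaced by estimates obtained from the symbol through maximal inequalities and the Markov property. Write $M_t:=\sup_{0\le s\le t}|X_s-x|$ and, for $y\in\real$, $M^{(y)}_s:=\sup_{0\le u\le s}|X_u-y|$, and put $g(t):=u^{-1}(x,t/\log|\log t|)$; both $t\mapsto M_t$ and $t\mapsto g(t)$ are non-decreasing. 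The technical core is a two-sided bound, valid for small $r$ and $t$,
\begin{equation*}
    \exp\!\big(-c_2(x)\,t\,\psip(x,1/r)\big)
    \;\le\;
    \Pp^x\big(M_t\le r\big)
    \;\le\;
    C_0\exp\!\big(-c_1(x)\,t\,\psip(x,1/r)\big).
\end{equation*}
For the upper bound I would split $[0,t]$, via the Markov property, into $\lfloor c\,t\,\psip(x,1/r)\rfloor$ sub-blocks and bound the probability of not leaving a ball of radius $\asymp r$ on each block away from~$1$; the needed lower bound $\Pp^y(M^{(y)}_s\ge r)\ge\delta$ for $s\asymp 1/\psip(y,1/r)$ combines a Schilling-type maximal inequality $\Pp^y(M^{(y)}_s\ge r)\le c\,s\sup_{|z-y|\le r}\psip(z,1/r)$ (where \eqref{A1}, via the sector condition and $|p|\le 2\psip$, is used to pass from $p$ to $\psip$) with \eqref{A6}, which rules out the process being trapped near its starting point. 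For the lower bound I would iterate, again by the Markov property, the estimate $\Pp^y(M^{(y)}_s\le r)\ge\tfrac12$ valid for $s\lesssim 1/\psip(y,1/r)$. Throughout, \eqref{A5} serves to replace $\psip(y,\cdot)$ and $\psip(z,\cdot)$, for $y,z$ in a ball of radius comparable to~$r$ about~$x$, by $\psip(x,\cdot)$ at the cost of the factor $\kappa(x)$, and the doubling $\psip(x,2\xi)\le 4\psip(x,\xi)$ absorbs constants appearing in radii.

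Granting this, fix $\theta\in(0,1)$ and put $t_n:=\theta^n$. By monotonicity of $M$ and $g$, for $t\in[t_{n+1},t_n]$ one has $M_{t_{n+1}}/g(t_n)\le M_t/g(t)\le M_{t_n}/g(t_{n+1})$, so it suffices to control the two sequences $M_{t_{n+1}}/g(t_n)$ and $M_{t_n}/g(t_{n+1})$. From \eqref{chung-e05} and the definition of the generalized inverse, $g(t_n)$ is characterised — up to the constants coming from \eqref{A5} — by $t_n\,\psip\big(x,1/g(t_n)\big)\asymp\log|\log t_n|\asymp\log n$. For the bound $\varliminf_{t\to0}M_t/g(t)\ge C_1(x)>0$, I would apply the (first) Borel--Cantelli lemma to $\{M_{t_{n+1}}\le\varepsilon g(t_n)\}$: the upper staying-probability bound, monotonicity of $\psip(x,\cdot)$ and the doubling and regularity encoded in \eqref{A1}--\eqref{A6} give $\Pp^x\big(M_{t_{n+1}}\le\varepsilon g(t_n)\big)\le C_0\,n^{-\beta(\varepsilon)}$ with $\beta(\varepsilon)\to\infty$ as $\varepsilon\to0$; taking $\varepsilon$ so small that $\beta(\varepsilon)>1$ and summing, $M_{t_{n+1}}>\varepsilon g(t_n)$ for all large~$n$, whence $\varliminf_{t\to0}M_t/g(t)\ge\varepsilon=:C_1(x)$.

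For $\varliminf_{t\to0}M_t/g(t)\le C_2(x)<\infty$ I would use the lower staying-probability bound with a large constant $C_2$. On the intersection of the block event $B_n:=\{\sup_{t_{n+1}\le s\le t_n}|X_s-X_{t_{n+1}}|\le\tfrac13C_2\,g(t_n)\}$ with $\{|X_{t_{n+1}}-x|\le\tfrac13C_2\,g(t_n)\}$ and $\{M_{t_{n+1}}\le\tfrac13C_2\,g(t_n)\}$ one has $M_{t_n}\le C_2\,g(t_n)$. Conditioning on $\Ff_{t_{n+1}}$, the lower bound together with \eqref{A5} gives $\Pp^x(B_n\mid\Ff_{t_{n+1}})\ge c\,n^{-\gamma(C_2)}$ on $\{X_{t_{n+1}}\in B(x,\tfrac13C_2\,g(t_n))\}$, with $\gamma(C_2)\to0$ as $C_2\to\infty$, so the relevant conditional probabilities are non-summable once $C_2$ is large; a conditional (second) Borel--Cantelli argument — the blocks being made quasi-independent via the Markov property at the times $t_{n+1}$, as in \cite{ADS12} — then yields $M_{t_n}\le C_2\,g(t_n)$ for infinitely many~$n$, $\Pp^x$-a.s. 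Finally, $\varliminf_{t\to0}M_t/g(t)$ is measurable with respect to the germ $\sigma$-field $\Ff_{0+}=\bigcap_{t>0}\Ff_t$, which is $\Pp^x$-trivial by Blumenthal's $0$--$1$ law for Feller processes; so this $\varliminf$ equals a deterministic constant $C(x)$, and combining the last two steps $0<C_1(x)\le C(x)\le C_2(x)<\infty$, which is \eqref{chung-e10}.

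The main obstacle is the two-sided staying-probability estimate in the spatially inhomogeneous setting: one cannot invert a characteristic function, so these bounds have to be assembled from symbol-based maximal inequalities glued together with the Markov property, and it is precisely here that the \emph{quantitative} hypotheses \eqref{A5} (local comparability of $\psip$ across nearby points and scales) and \eqref{A6} (genuine motion), rather than mere existence of the symbol, are indispensable; a secondary delicate point is that $u^{-1}(x,\cdot)$ need not be a true inverse, so every appeal to $u(x,g(t_n))\asymp t_n/\log|\log t_n|$ must be set up to cost only controllable constants.
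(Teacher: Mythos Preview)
Your overall architecture matches the paper's (two-sided staying-probability bounds, then Borel--Cantelli along a geometric grid, then Blumenthal); the paper phrases the endgame via the first exit time $\tau^x(a)$ and $\varlimsup_{a\to0}\tau^x(a)/\big(u(x,a)\log|\log u(x,a)|\big)$ rather than $M_{t_n}/g(t_n)$, but that is a matter of taste. The real issue is in how the two-sided bound is assembled: you have swapped the roles of the hypotheses, and one of the two halves does not close.

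For the \emph{upper} bound on $\Pp^x(M_t\le r)$ you correctly need $\Pp^y(M^{(y)}_s\ge r)\ge\delta$, but you propose to obtain it from the maximal inequality $\Pp^y(M^{(y)}_s\ge r)\le c\,s\sup_{|z-y|\le r}\psip(z,1/r)$ together with \eqref{A6}. That inequality points the wrong way, and \eqref{A6} only says $\Pp^y(X_t<y)\le q$, which concerns one-sidedness and carries no information about leaving a ball of radius $r$. What actually does the job is the \emph{companion} Schilling-type inequality
\[
    \Pp^y\big(M^{(y)}_s< r\big)\;\le\; c\Big(s\inf_{|z-y|\le r}\psip\big(z,\tfrac1r\big)\Big)^{-1};
\]
for $s=\gamma c\,u(x,r)$ with $\gamma>1$ this gives $\Pp^y(M^{(y)}_s<r)\le 1/\gamma$ directly, and iterating via the Markov property yields the upper bound with no appeal to \eqref{A6}.

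The place where \eqref{A6} is genuinely indispensable is the \emph{lower} bound on $\Pp^x(M_t\le r)$, and here your naive iteration of $\Pp^y(M^{(y)}_s\le r)\ge\tfrac12$ does not close: after one block the process sits at some $y$ with $|y-x|\le r$, and staying within $r$ of $y$ on the next block only guarantees $|X-x|\le 2r$; after $n$ blocks nothing prevents a drift of order $nr$ away from $x$. The paper repairs this with \eqref{A6}: on $\{M^{(y)}_s\le r\}$ one additionally demands $X_s-y\in[0,r]$ when $y<x$ and $X_s-y\in[-r,0]$ when $y\ge x$. Since $\Pp^y(X_s<y)\le q$ while $\Pp^y(M^{(y)}_s\le r)\ge 1-\kappa(x)/(4\gamma)$ (from the first maximal inequality plus \eqref{A5}), the intersected event has probability at least $1-\kappa(x)/(4\gamma)-q>0$ once $\gamma>\kappa(x)/\big(4(1-q)\big)$, and on it $|X_s-x|\le r$, so the iteration stays anchored at $x$ and produces $\Pp^x(M_{nT}^*\le 2r)\ge C_2^n$. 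This drift-control construction is the missing idea; as you have written it, the lower-bound step fails.
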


Before we prove Theorem~\ref{chung-10} let us consider an example.
\begin{example}\label{chung-20}
    Take $\nu(x,dy)= \frac 14\,{\alpha(x)(2-\alpha(x))}\,{|y|^{-1-\alpha(x)} }\, dy$, where  $\alpha: \real\to [\alpha_0,\alpha_1]\subset(0,2)$ is continuously differentiable, with uniformly bounded derivative.  Clearly, \eqref{A1} holds. A direct calculation shows that $\psip(x,\xi)= |\xi|^{\alpha(x)}$.

     We will now check \eqref{A5}. Pick $R\in (0,1]$. Since $\alpha$ is continuously differentiable, we have
    $$
    \sup_{|x-y|\leq 3R} \alpha(y)- \inf_{|x-y|\leq 3R} \alpha(y)
    = \max_{z,y\in B(x,3R)} |\alpha(y)-\alpha(z)| \leq 6R\max_{y\in B(x,3R)}|\alpha'(y)|,
    $$
    implying
    \begin{align*}
        \frac{\sup_{|x-y|\leq 2R} \psip(y,1/R)}{\inf_{|x-y|\leq 3R} \psip(y,1/R)}
        \leq \frac{\sup_{|x-y|\leq 3R} \psip(y,1/R)}{\inf_{|x-y|\leq 3R} \psip(y,1/R)}
        \leq \Big(\frac1{R}\Big)^{6R\max\limits_{y\in B(x,3R)}|\alpha'(y)|}
        \leq \Big(\frac1{R^R}\Big)^{6\max\limits_{y\in B(x,3R)}|\alpha'(y)|}
    \end{align*}
    for all $R\in (0,1]$. Thus, \eqref{A5} holds with any function $\kappa(x)\geq 64^{\max_{y\in B(x,1)}|\alpha'(y)|}$.

    Let us check \eqref{A6}. In \cite[Theorem~5.1]{Ko00} it is proved that for $\alpha\in C^1_b(\real)$, there exists a Feller process $(X_t)_{t\geq 0}$ corresponding to the characteristic triplet $\big(0,0,\frac 14\,{\alpha(x)(2-\alpha(x))}\,{|y|^{-1-\alpha(x)} }\, dy\big)$. Moreover, this process has a transition density $p(t,y,z)$, and
    $$
        p(t,y,z)
        = p_{\alpha(y)}(t,y-z)[1+O(1)\min\{1, (1+|\log t|)|y-z|\} + O(t^\delta)] + \frac{O(t)}{1+|y-z|^{\alpha_0+1}}
    $$
    for the symmetric $\alpha(y)$-stable transition density $p_{\alpha(y)}(t,y-z)$ and some $\delta\in (0,1)$; the big-$O$ terms refer to $t\to 0$ and do not depend on $y,z$. Using the scaling property $p_{\alpha(y)}(t,y-z) = t^{-\alpha(y)} p_1(1,t^{-1/\alpha(y)}(y-z))$ and the unimodality of the stable law we get
    $$
        \Pp^y(X_t>y)
        \geq \int_{y\leq z\leq y+ \epsilon t^{1/\alpha(y)}}  p(t,y,z)\,dz
        \geq \epsilon p_{\alpha(y)}(1,\epsilon)(1+O(1)) + O(t^{1/\alpha(y)})
    $$
    which proves \eqref{A6}.

    Next we calculate the rate of convergence. Since $\alpha(y)$ is  continuously differentiable, we may assume that on $(x-3R,x+3R)$ there is a local minimum at $x$, say. Then for $R$ small enough $\min_{|x-y|\leq 3R} \alpha(y)=\alpha(x)$, and so
    $$
        u(x,R)
        =\left(\inf_{|x-y|\leq 3R} \psip(y,R^{-1})\right)^{-1}
        =\sup_{|x-y|\leq 3R} R^{\alpha(y)}= R^{\min_{|x-y|\leq 3R} \alpha(y)}
        = R^{\alpha(x)}.
    $$
    Consequently, $u^{-1}(x,\rho)= \rho^{1/\alpha(x)}$.

    Assume now that $x$ is not a local minimum of $\alpha$. Then $x$ is either a local maximum, or $\alpha$  is decreasing (respectively,  increasing) on $[x-3R,x+3R]$. In both cases the minimum is attained at one of the endpoints.
    Without loss of generality we assume that the minimum is attained at the point $x-3R$.  Thus,
    $$
        u(x,R)=    R^{\alpha(x- 3R)}
    $$
    and as $\alpha_0\leq \alpha(x)\leq \alpha_1$, we have
    \begin{equation}\label{rho01}
        c_0 \rho^{1/\alpha_0}\leq   u^{-1}(x,\rho)\leq c_1 \rho^{1/\alpha_1}.
    \end{equation}
    Since $\alpha$ is continuously differentiable we get, using a Taylor expansion,
    \begin{equation}\label{al}
        \alpha(x- R)= \alpha(x) - R\alpha'(x -  \theta R),
    \end{equation}
    where $\theta=\theta(x,R)\in (0,1)$. Note that the function $ R\mapsto R^{\alpha(x-3R)}$ is continuous and tends to $0$ as $R\to 0$, implying that for sufficiently small $\rho$ the equation $R^{\alpha(x-3R)}= \rho$ admits a solution; thus,
    $$
        u^{-1}(x,\rho) = \min\{ R \,:\, R^{\alpha(x-3R)}= \rho\}.
    $$
    By \eqref{al} the function $u^{-1}(x,\rho)$ satisfies the equation
    \begin{equation}\label{u-eq}
        u^{-1}(x,\rho)= \rho^{1/(\alpha(x)- 3u^{-1}(x,\rho) \alpha'(x-3\theta u^{-1}(x,\rho)))}.
    \end{equation}
    Therefore, by \eqref{u-eq}  we have
    \begin{equation*}
    \begin{split}
        \lim_{\rho\to 0} \frac{\rho^{1/\alpha(x)}}{u^{-1}(x,\rho)}
        &=\lim_{\rho\to 0} \exp \Big\{\Big(\frac{1}{\alpha(x)}- \frac{1}{\alpha(x)- 3u^{-1}(x,\rho) \alpha'(x-3\theta u^{-1}(x,\rho))}\Big)\ln \rho\Big\}
        =1,
    \end{split}
    \end{equation*}
    where we used that $\alpha(x) \in [\alpha_0,\alpha_1]$ and, because of \eqref{rho01}, $u^{-1}(x,\rho) \ln \rho\to 0$ as $\rho\to 0$.

  This gives \eqref{chung-e10} with $u^{-1}(x,\rho)= \rho^{1/\alpha(x)}$.
\end{example}

For the proof of Theorem~\ref{chung-10} we need several auxiliary results in order to estimate the probability that $(X_t)_{t\geq 0}$ exits a ball of radius $r>0$ within time $t>0$.

Lemma~\ref{chung-30} is the key to derive the LIL. We record it in a form which is convenient for our purposes, and refer to \cite{Sch98} for the original version, as well as to its improvement (with a simplified proof) \cite[Proposition~4.3]{SW12}. 
A close inspection of the arguments in \cite{SW12} reveals that one does not need the `bounded coefficients' assumption $\sup_{x\in\real, |\xi|\leq 1}|p(x,\xi)| < \infty$.
\begin{lemma}\label{chung-30}
    Let $(X_t)_{t\geq 0}$ be a one-dimensonal Feller process with symbol $p(x,\xi)$ satisfying \eqref{A1}. Then for all $t>0$ and $R>0$  we have
    \begin{gather}\label{chung-e31}
        \Pp^x \Big(\sup_{s\leq t}|X_s-x| \geq R\Big)
        \leq c\, t   \sup_{|x-y|\leq R}  \psip\big(y,\tfrac{1}{R}\big),\\
    \label{chung-e33}
        \Pp^x \Big(\sup_{s\leq t}|X_s-x| < R\Big)
        \leq c\left( t  \inf_{|x-y|\leq R} \psip\big(y,\tfrac{1}{ R}\big)\right)^{-1}.
    \end{gather}
    The constant $c\geq 1$ depends only on the sector constant $c_0$, but not on $x$.
\end{lemma}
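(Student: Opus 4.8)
The two estimates follow the classical recipe of feeding suitably rescaled cut-off functions into Dynkin's formula; they are \cite[Prop.~4.3]{SW12} (which refines \cite{Sch98}) rewritten in terms of $\psip$, and the argument I would give runs as follows.

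\emph{The bound \eqref{chung-e31}.} Fix once and for all a profile $\chi\in C_c^\infty(\real)$ with $0\leq\chi\leq1$, $\chi(0)=1$ and $\chi\equiv0$ on $\{|z|\geq1\}$, and put $v_R^x(z):=\chi\big((z-x)/R\big)$. By \eqref{A1} we have $v_R^x\in C_c^\infty(\real)\subseteq D(A)$, and $v_R^x(x)=1$, $0\leq v_R^x\leq1$, $v_R^x\equiv0$ off $B(x,R)$. With the stopping time $\tau:=\inf\{s\geq0:|X_s-x|\geq R\}$, optional stopping of the Dynkin martingale $s\mapsto v_R^x(X_s)-v_R^x(X_0)-\int_0^s Av_R^x(X_r)\,dr$ at the bounded time $t\wedge\tau$ gives $\Ee^x v_R^x(X_{t\wedge\tau})=1+\Ee^x\int_0^{t\wedge\tau}Av_R^x(X_r)\,dr$. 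On $\{\tau\leq t\}$ the path has left the open ball, so $|X_\tau-x|\geq R$ and $v_R^x(X_\tau)=0$; since $0\leq v_R^x\leq1$ the left-hand side equals $\Ee^x[v_R^x(X_t);\tau>t]\leq\Pp^x(\tau>t)$. Hence
\begin{equation*}
    \Pp^x\Big(\sup_{s\leq t}|X_s-x|\geq R\Big)
    = \Pp^x(\tau\leq t)
    \leq -\Ee^x\!\int_0^{t\wedge\tau}\!Av_R^x(X_r)\,dr
    \leq t\sup_{|y-x|\leq R}\big|Av_R^x(y)\big| .
\end{equation*}
It remains to prove the symbol estimate $|Av_R^x(y)|=|p(y,D)v_R^x(y)|\leq c\,\psip(y,1/R)$ for $|y-x|\leq R$. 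Inserting the L\'evy--Khintchine representation \eqref{intro-e25} (recall $\sigma^2\equiv0$) into $p(y,D)v_R^x(y)=\int e^{iy\xi}\widehat{v_R^x}(\xi)\,p(y,\xi)\,d\xi$, noting $\widehat{v_R^x}(\xi)=R\,\hat\chi(R\xi)e^{-ix\xi}$ and substituting $\eta=R\xi$, we get $|p(y,D)v_R^x(y)|\leq\int|\hat\chi(\eta)|\,|p(y,\eta/R)|\,d\eta$; the Schwartz decay of $\hat\chi$, the doubling $\psip(y,2\xi)\leq4\psip(y,\xi)$ and $|p(y,\xi)|\leq c_0'\,\psip(y,\xi)$ (the latter from $1-\cos u\leq2\min\{u^2,1\}$ and the sector condition) bound this by a multiple of $\sup_{|\xi|\leq1/R}\psip(y,\xi)=\psip(y,1/R)$, the last equality by monotonicity of $\psip(y,\cdot)$.

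\emph{The bound \eqref{chung-e33}.} Since $\{\sup_{s\leq t}|X_s-x|<R\}\subseteq\{\tau>t\}$ (by right-continuity), Markov's inequality gives $\Pp^x(\sup_{s\leq t}|X_s-x|<R)\leq t^{-1}\Ee^x\tau$, and it suffices to prove $\Ee^x\tau\leq c\,\big(\inf_{|y-x|\leq R}\psip(y,1/R)\big)^{-1}$. Fix a profile so that $h_R^x\in C_c^\infty(\real)$ satisfies $0\leq h_R^x\leq1$, $h_R^x(x)=1$, $\operatorname{supp}h_R^x\subseteq B(x,R)$ and $1-h_R^x(z)$ is comparable to $\min\{|z-x|^2/R^2,1\}$ (so $h_R^x$ is ``quadratic at its peak''; a flat-topped bump does not suffice here). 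The key symbol lower bound is that there is $c_1=c_1(c_0)>0$ with
\begin{equation*}
    p(y,D)h_R^x(y)\;\geq\;c_1\,\inf_{|z-x|\leq R}\psip\big(z,1/R\big)
    \qquad\text{for every }y\in B(x,R).
\end{equation*}
Granting this, optional stopping of the Dynkin martingale for $h_R^x$ at $t\wedge\tau$ gives
\begin{equation*}
    0\;\leq\;\Ee^x h_R^x(X_{t\wedge\tau})
    \;=\;1+\Ee^x\!\int_0^{t\wedge\tau}\!Ah_R^x(X_r)\,dr
    \;\leq\;1-c_1\Big(\inf_{|z-x|\leq R}\psip(z,1/R)\Big)\,\Ee^x(t\wedge\tau),
\end{equation*}
so that $\Ee^x(t\wedge\tau)\leq c_1^{-1}\big(\inf_{|z-x|\leq R}\psip(z,1/R)\big)^{-1}$; letting $t\uparrow\infty$ (monotone convergence) proves the required bound on $\Ee^x\tau$, hence \eqref{chung-e33}.

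The martingale bookkeeping above is routine; the substance, and the main obstacle, is in the two symbol estimates — above all the lower bound on $p(y,D)h_R^x(y)$ on the whole ball $B(x,R)$. Establishing it means writing $Ah_R^x(y)$ in the integro-differential form coming from \eqref{intro-e25}, splitting the L\'evy integral at $|z|=R$, Taylor-expanding $h_R^x$ around $y$, and — the delicate point — controlling the only conditionally convergent contribution of the large jumps and of the drift $l(y)$; this is exactly where the sector condition is used, since it forces the effective drift to vanish and pins down $\Im p$ in terms of $\Re p$. These computations are those of \cite{Sch98} and, in streamlined form, of \cite[Prop.~4.3]{SW12}; the one extra point, which a line-by-line inspection confirms, is that they use only the \emph{local} quantities $\sup_{|y-x|\leq R}\sup_{|\xi|\leq1/R}|p(y,\xi)|$ and $\inf_{|y-x|\leq R}\psip(y,1/R)$, never the global boundedness $\sup_{x\in\real,|\xi|\leq1}|p(x,\xi)|<\infty$. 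Hence the constant $c\geq1$ can be taken to depend only on $c_0$ (and the fixed profiles $\chi,h$), not on $x$ or $R$.
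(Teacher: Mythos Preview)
The paper does not prove this lemma; it simply records it as \cite[Proposition~4.3]{SW12} (refining \cite{Sch98}) together with the remark that a close reading of those arguments shows the bounded-coefficients assumption $\sup_{x,|\xi|\leq1}|p(x,\xi)|<\infty$ is unnecessary. Your proposal cites the same two references, makes the same locality observation, and in addition sketches the Dynkin-formula/symbol-estimate argument from those sources, so it is fully aligned with---indeed more detailed than---the paper's own treatment.
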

First we extend \eqref{chung-e33}.

\begin{lemma}\label{chung-40}
    Under the assumptions of Lemma~\ref{chung-30}, we find for $n\geq 2$ and all $t,R>0$
    \begin{equation}\label{chung-e35}
        \Pp^x \Big( \sup_{s\leq nt}|X_s - x| < R\Big)
        \leq (4c)^n\left( t \inf_{|x-y|\leq   3 R }  \psip\big(y,\tfrac{1}{ R}\big)\right)^{-n}.
    \end{equation}
\end{lemma}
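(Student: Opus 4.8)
The plan is to iterate the one-step lower-tail bound \eqref{chung-e33} of Lemma~\ref{chung-30} by subdividing $[0,nt]$ into the $n$ blocks $[(k-1)t,kt]$, $k=1,\dots,n$, and applying the Markov property at the deterministic times $t,2t,\dots,(n-1)t$. The geometric point is simple: if a path stays inside $B(x,R)$ on all of $[0,nt]$, then in particular $X_{(n-1)t}\in B(x,R)$, so on the last block the path also stays inside $B(X_{(n-1)t},2R)$, and this is precisely the type of event that \eqref{chung-e33} controls, now with radius $2R$.

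Concretely, I would first record the inclusion
\begin{multline*}
    \Big\{\sup_{s\leq nt}|X_s-x|<R\Big\}
    \subseteq\\
    \Big\{\sup_{s\leq (n-1)t}|X_s-x|<R\Big\}
    \cap\Big\{\sup_{(n-1)t\leq s\leq nt}|X_s-X_{(n-1)t}|<2R\Big\},
\end{multline*}
which is immediate from the triangle inequality. The first event on the right is $\mathcal{F}_{(n-1)t}$-measurable; conditioning on $\mathcal{F}_{(n-1)t}$ and using the Markov property turns the second into $\phi(X_{(n-1)t})$, where $\phi(y):=\Pp^y\big(\sup_{s\leq t}|X_s-y|<2R\big)$. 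Hence
\begin{equation*}
    \Pp^x\Big(\sup_{s\leq nt}|X_s-x|<R\Big)
    \leq\Ee^x\Big[\I_{\{\sup_{s\leq (n-1)t}|X_s-x|<R\}}\,\phi\big(X_{(n-1)t}\big)\Big].
\end{equation*}

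Next I would bound $\phi$ uniformly over $B(x,R)$. By \eqref{chung-e33} with $2R$ in place of $R$, $\phi(y)\leq c\big(t\inf_{|y-z|\leq 2R}\psip(z,\tfrac{1}{2R})\big)^{-1}$. For $y\in B(x,R)$ one has $B(y,2R)\subseteq B(x,3R)$, while the doubling property $\psip(z,2\xi)\leq 4\psip(z,\xi)$ gives $\psip(z,\tfrac{1}{2R})\geq\tfrac14\psip(z,\tfrac{1}{R})$; combining these,
\begin{equation*}
    \phi(y)\leq 4c\Big(t\inf_{|x-z|\leq 3R}\psip\big(z,\tfrac{1}{R}\big)\Big)^{-1}=:K
    \qquad\text{for every } y\in B(x,R).
\end{equation*}
Substituting into the previous display yields $\Pp^x\big(\sup_{s\leq nt}|X_s-x|<R\big)\leq K\,\Pp^x\big(\sup_{s\leq (n-1)t}|X_s-x|<R\big)$. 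Iterating this $n-1$ times and estimating the base term by \eqref{chung-e33} together with $B(x,R)\subseteq B(x,3R)$, namely $\Pp^x\big(\sup_{s\leq t}|X_s-x|<R\big)\leq c\big(t\inf_{|x-z|\leq 3R}\psip(z,\tfrac1R)\big)^{-1}\leq K$, gives
\begin{equation*}
    \Pp^x\Big(\sup_{s\leq nt}|X_s-x|<R\Big)\leq K^n
    =(4c)^n\Big(t\inf_{|x-z|\leq 3R}\psip\big(z,\tfrac{1}{R}\big)\Big)^{-n}.
\end{equation*}

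I do not expect a real obstacle: this is the standard ``subdivide and iterate the Markov property'' scheme. The two points that need care are (i) the bookkeeping of the three radii $R\subseteq 2R\subseteq 3R$, which is exactly what forces the infimum in the final bound to range over the \emph{fixed} ball $B(x,3R)$ and hence to be independent of the random intermediate position $X_{(n-1)t}$; and (ii) invoking the doubling property once per block to pass from $\psip(\cdot,\tfrac{1}{2R})$ back to $\psip(\cdot,\tfrac{1}{R})$, which is the source of the constant $4$ in $(4c)^n$. One should also note that $\{\sup_{s\leq nt}|X_s-x|<R\}$ is a genuine event since the paths are c\`adl\`ag, so the supremum may be taken over a countable dense subset of $[0,nt]$.
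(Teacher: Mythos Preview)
Your proposal is correct and follows essentially the same route as the paper: subdivide $[0,nt]$ into blocks, use the inclusion $\{X_{nt}^*<R\}\subseteq\{X_{(n-1)t}^*<R\}\cap\{\sup_{(n-1)t\leq s\leq nt}|X_s-X_{(n-1)t}|<2R\}$, apply the Markov property at time $(n-1)t$, bound the resulting $\phi(y)$ via \eqref{chung-e33} together with $B(y,2R)\subseteq B(x,3R)$ and the doubling inequality $\psip(\cdot,1/(2R))\geq\tfrac14\psip(\cdot,1/R)$, and iterate. Your write-up is in fact a bit more explicit than the paper's about the base case and the radius bookkeeping, but the argument is the same.
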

\begin{proof}
Set, for simplicity, $X_t^* := \sup_{s\leq t}|X_s-x|$.
We use induction and the Markov property.
\begin{align*}
    \Pp^x (X_{nt}^*<R)
    &\leq \Ee^x \left( \I_{\{X_{(n-1)t}^*<R\}} \I_{\{\sup_{0\leq s\leq t} |X_{(n-1)t+s}-X_{(n-1)t}|<2R\}}\right)\\
    &= \Ee^x \left( \I_{\{X_{(n-1)t}^*<R\}} \Ee^{X_{(n-1)t}} \big[\I_{\{ X_t^*<2R\}}\big]\right)\\
    &\leq \sup_{|x-y|\leq R} \Ee^y\big[ \I_{\{X_t^*<2R\}} \big] \Ee^x \big[\I_{\{ X_{(n-1)t}^* < R\}}\big]\\
    &\leq  c\, \sup_{|x-y|\leq R} \left( t \inf_{|z-y|\leq  2R } \psip\big(z,\tfrac{1}{2 R}\big) \right)^{-1}
                         (4c)^{n-1} \left( t \inf_{|x-z|\leq  3R } \psip\big(z,\tfrac{1}{ R}\big)\right)^{-n+1}\\
    &\leq (4c)^n\left( t  \inf_{|x-y|\leq  3 R  } \psip\big(y,\tfrac{1}{ R}\big) \right)^{-n},
\end{align*}
where we used, in the second line from below, the fact that $\psip(x,2\xi)\leq 4\psip(x,\xi)$.
\end{proof}

\begin{remark}\label{chung-50}
    Let $u(x,R)$ be as in \eqref{chung-e05}. Then \eqref{chung-e35} becomes for any $\gamma>1$
    \begin{equation*}
        \Pp^x \Big(\sup_{s\leq n\cdot (4\gamma c) u(x,R)}|X_s-x| <R\Big) \leq \gamma^{-n},
        \qquad R>0.
    \end{equation*}
\end{remark}
\begin{lemma}\label{chung-60}
    Suppose that the assumptions of Theorem~\ref{chung-10} are satisfied. Denote by $c$ the constant appearing in Lemma~\ref{chung-30}, by $\kappa(x)$ the constant from \eqref{A5}, and by $\gamma=\gamma(x)> \max\big\{ 1, \frac{\kappa(x)}{4(1-q)}\big\}$, where $q=q(x)$ is the constant from \eqref{A6}.  Then there exist constants $p_{1,\gamma}(x)$, $p_{2,\gamma}(x)\in (0,1)$  such that  for all $m\geq 1$
    \begin{equation}\label{chung-e45}
        p_{2,\gamma}(x)^{m+1} \leq \Pp^x \Big(\sup_{s\leq mu(x,R)}|X_s-x|\leq R\Big)  \leq p_{1,\gamma}(x)^m.
    \end{equation}
\end{lemma}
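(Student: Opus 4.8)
The plan is to deduce the upper bound in \eqref{chung-e45} from the iterated exit estimate of Lemma~\ref{chung-40} (packaged as Remark~\ref{chung-50}), and the lower bound from a Markov-property iteration over $\asymp m$ time-blocks whose key ingredient is \eqref{A6}. For the upper bound, recall from Remark~\ref{chung-50} (the case $n=1$ following directly from \eqref{chung-e33}, which gives $\le\tfrac1{4\gamma}\le\gamma^{-1}$) that $\Pp^x\big(\sup_{s\le n\cdot 4\gamma c\,u(x,R)}|X_s-x|<R\big)\le\gamma^{-n}$ for every integer $n\ge 1$. Running this with $R$ replaced by $2R$ and using $\psip(y,\tfrac1{2R})\ge\tfrac14\psip(y,\tfrac1R)$ together with \eqref{A5} (which yields $u(x,R)\le u(x,2R)\le 4\kappa(x)u(x,R)$) turns it, for every integer $m\ge m_0=m_0(x,\gamma)$, into $\Pp^x\big(\sup_{s\le mu(x,R)}|X_s-x|\le R\big)\le\gamma^{-n}$ with $n:=\entier{m/(16\gamma c\kappa(x))}\ge 1$, hence into $p_{1,\gamma}(x)^m$ after adjusting the base; note that passing to $2R$ also removes the open/closed-ball discrepancy. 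For the finitely many $m<m_0$ one uses $\Pp^x(\sup_{s\le mu(x,R)}|X_s-x|\le R)\le\Pp^x(\sup_{s\le u(x,R)}|X_s-x|\le R)$ and the fact that the right-hand side is bounded away from $1$ uniformly in $R\in(0,1]$; this uniform bound cannot come from \eqref{chung-e33} (which only gives the vacuous constant $c\ge 1$ at the base scale) and is instead extracted from \eqref{A6}, which via the Markov property forces a positive lower bound for the exit probability $\Pp^x(\sup_{s\le u(x,R)}|X_s-x|>R)$.

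For the lower bound, split $[0,mu(x,R)]$ into $K:=\entier{m/\delta}+1$ blocks of equal length $h:=mu(x,R)/K\le\delta\,u(x,R)$, with $\delta=\delta(x,\gamma)\in(0,1)$ small, and set $Y_k:=X_{kh}$, so $Y_0=x$. By the Markov property it is enough to produce $\rho_0=\rho_0(x,\gamma)>0$ with $\Pp^y\big(\sup_{s\le h}|X_s-x|\le R,\ X_h\in B(x,\tfrac12 R)\big)\ge\rho_0$ for all $y\in B(x,\tfrac12 R)$; iterating over the $K$ blocks then gives $\Pp^x(\sup_{s\le mu(x,R)}|X_s-x|\le R)\ge\rho_0^{K}\ge\big(\rho_0^{1/\delta}\big)^{m+1}=:p_{2,\gamma}(x)^{m+1}$, and the rounding in $K$ is exactly what produces the exponent $m+1$ rather than $m$. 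For the single-block estimate, \eqref{chung-e31}, the doubling of $\psip$ and \eqref{A5} show that $X$ stays within $\tfrac12 R$ of its starting point on $[0,h]$ with probability at least $1-4c\,\delta\,\kappa(x)$, while \eqref{A6} gives $\Pp^y(X_h\ge y)\ge 1-q$; on the intersection $X$ remains in $B(y,\tfrac12 R)\subset B(x,R)$, and choosing $\delta\asymp 1/(c\gamma)$ makes this intersection have probability at least $(1-q)-4c\,\delta\,\kappa(x)>0$ -- here the hypothesis $\gamma>\kappa(x)/(4(1-q))$ is precisely what leaves room for such a $\delta$.

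The main obstacle is the lower bound, and within it the geometric bookkeeping that makes the iteration close. A version that forces an $O(R/K)$ displacement per block is hopeless, since by \eqref{chung-e31} and the doubling of $\psip$ each block would then cost a factor of order $1-cm^2\kappa(x)/\delta$; one is therefore forced to permit an $O(R)$ displacement per block and to use \eqref{A6} to prevent a systematic drift out of $B(x,R)$ (the sector condition in \eqref{A1} keeping the drift part itself dominated by $\psip$). Arranging the nested regions so that ``small displacement from $y$'' together with ``no downward move at time $h$'' genuinely keeps $X$ inside $B(x,R)$ and returns $Y_{k+1}$ to the region from which the next block can be launched -- distinguishing the cases $y\le x$ and $y>x$, and if necessary working with a lop-sided neighbourhood of $x$ (the asymmetry of \eqref{A6} being the reason such a neighbourhood is the natural object) -- is the delicate step, and the same circle of ideas delivers the uniform base-scale lower bound for the exit probability needed in the upper-bound part.
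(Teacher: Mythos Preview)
Your overall strategy is the paper's: the upper bound comes from the iterated exit estimate (Lemma~\ref{chung-40}/Remark~\ref{chung-50}), and the lower bound from a Markov iteration in which \eqref{A6} prevents drift out of a neighbourhood of~$x$. Two points of comparison are worth making.

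\medskip
\emph{Upper bound.} The detour through radius $2R$ and the separate treatment of small $m$ are unnecessary. The paper simply applies Remark~\ref{chung-50} at radius $R$, parametrizes by $n$ (setting $m:=\entier{4\gamma c\,n}+1$) and absorbs the base-change into $p_{1,\gamma}(x):=\gamma^{-1/(4\gamma c+1)}$; the open/closed ball distinction is immaterial for this direction. Your claim that one needs \eqref{A6} to get a sub-$1$ bound at the base scale is not how the paper proceeds.

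\medskip
\emph{Lower bound.} Here your sketch has a genuine gap. The single-block argument you give shows that on
\[
\big\{\sup_{s\le h}|X_s-y|\le \tfrac12 R\big\}\cap\{X_h\ge y\}
\]
one has $\sup_{s\le h}|X_s-x|\le R$ and $X_h\in[y,y+\tfrac12 R]$, but for $y\in(x,x+\tfrac12 R]$ this set does \emph{not} force $X_h\in B(x,\tfrac12 R)$ (it allows $X_h$ up to $x+R$). Hence the iteration ``launch from $B(x,\tfrac12 R)$, return to $B(x,\tfrac12 R)$'' does not close, and no fixed lop-sided launching region repairs this with the one-sided \eqref{A6}. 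The paper's device is not to aim for a fixed return region at all, but to make the constraint on the increment \emph{adaptive}: on the $k$th block one requires $\sup_{kT\le s\le (k+1)T}|X_s-X_{kT}|\le R$ together with
\[
X_{(k+1)T}-X_{kT}\in
\begin{cases}
[0,R],&X_{kT}<x,\\[2pt]
[-R,0],&X_{kT}\ge x,
\end{cases}
\]
with $T=u(x,R)/(4\gamma c)$. An easy induction then keeps the block endpoints in $B(x,R)$, so the intersection event implies $\sup_{s\le nT}|X_s-x|\le 2R$; \eqref{A6} combined with \eqref{chung-e50} gives the conditional lower bound $C_2=1-\tfrac{\kappa(x)}{4\gamma}-q>0$ on each block, and the conversion from $2R$ back to $R$ is done only at the very end via $u(x,2R)\le 4\kappa(x)\,u(x,R)$. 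In other words, the $R\leftrightarrow 2R$ conversion you placed in the upper bound is exactly the step that makes the \emph{lower} bound close.
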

\begin{proof} Set $X_t^* := \sup_{r\leq t}|X_r-x|$ and $X_{s,t}^* := \sup_{s\leq r\leq t}|X_r-X_s|$.  First we prove
    \begin{gather}\label{chung-e47}
        C_2^n \leq \Pp^x (X_{n u(x,R)/(16\kappa(x)\gamma c)}^* \leq R)
        \qquad\text{and}\qquad
        \Pp^x (X_{n\cdot (4\gamma c) u(x,R)}^* \leq R) \leq C_1^n,
    \end{gather}
    where $n\geq 1$ and $C_1, C_2>0$ are some constants. The upper bound follows from Lemma~\ref{chung-40} and Remark~\ref{chung-50} with $C_1 = 1/\gamma$, where $\gamma>1$ is arbitrary, independent of $x$.

    Let us establish the lower bound. The crux of the matter is now the behaviour of $\psip(x,1/R)$ with respect to the variable $x$. Recall that $\psip(x,\xi)$ satisfies \eqref{A5} with some constant $\kappa(x)$.  Then we get from \eqref{chung-e31} and \eqref{A5} for any $z$ such that $|z-x|<R$
    \begin{align*}
        \Pp^z (X_{u(x,R)/(4c\gamma)}^*\geq R)
        \leq  \frac{1}{4\gamma}\, u(x,R) \sup_{|z-y|\leq R} \psip\big(y,\tfrac{1}{ R}\big)
        \leq  \frac{1}{4\gamma}\, u(x,R) \sup_{|x-y|\leq 2R} \psip\big(y,\tfrac{1}{ R}\big)
        \leq  \frac{\kappa(x)}{4\gamma}.
    \end{align*}

    Taking  $\gamma\equiv \gamma(x)>\max\big\{1,\,\frac{\kappa(x)}{4(1- q)}\big\}$,  we find for all $z$ with $|z-x|<R$
    \begin{equation}\label{chung-e50}
        \Pp^z (X^*_{u(x,R)/(4 c \gamma)} <  R) \geq  1-\frac{\kappa(x)}{4 \gamma}>q,
        \qquad R>0.
    \end{equation}
    Let $T:= u(x,R)/(4\gamma c)$ be fixed. Observe that
    $
        \{ X_{nT}^* \leq 2R\} \supset \bigcap_{k=0}^{n-1}A_k,
    $
    where
    \begin{equation*}
    A_k:=\left\{ X_{kT,(k+1)T}^*\leq R, \quad
                X_{(k+1)T}-X_{kT}\in
                \left\langle\begin{aligned}
                &[0,R], \text{\ \ if\ \ } X_{kT}<x,\\[-\medskipamount]
                &[-R,0], \text{\ \ if\ \ } X_{kT}\geq x.
                \end{aligned}\right.
        \right\}
\end{equation*}
In other words, if $X_{kT}<x$, then at the next end-point $(k+1)T$ the process is above $X_{kT}$, but within the ball $B(X_{kT}, R)$, and if $X_{kT} \geq x$, then at time  $(k+1)T$ the process is below $X_{kT}$ but still in the ball $B(X_{kT}, R)$.
Denote $\mathcal{F}_k:= \sigma\{ X_s$, $ s\leq Tk\}$. Then
\begin{equation*}
\begin{split}
    \Ee^x \left[ \I_{A_{n-1}} | \mathcal{F}_{n-1}\right]
    &= \Pp^{X_{(n-1)T}} (A_{n-1})\\
    &= \Pp^{X_{(n-1)T}} \left(X_T^*\leq R,\: X_{T}-X_0\in [0,R]\right) \I_{\{X_{(n-1)T} < x\}}\\
    &\qquad\mbox{}+ \Pp^{X_{(n-1)T}} \left(X_T^*\leq R,\: X_{T}-X_0\in [-R,0]\right)\I_{\{X_{(n-1)T} \geq x\}}\\
    &\geq \inf_{x-R\leq z<x} \Pp^z \left(X_T^*\leq R,\: X_{T}-X_0\in [0,R]\right).
 \end{split}
 \end{equation*}
 Without loss of generality we may assume that \eqref{A6} holds with $t_0=1$; otherwise we would just get a further multiplicative factor. 
By \eqref{A6}  we have
\begin{align*}
    \Pp^z \left(X_T^*\leq R,\: X_{T}-X_0\in [-R,0]\right)
    \leq  \Pp^z \left( X_{T}-X_0<0\right)
    \leq q,
\end{align*}
uniformly in $z\in B(x,R)$ and $T\in [0,1]$. Using \eqref{chung-e50} with $z\in B(x, R)$ we get
\begin{align*}
    \Pp^z \left(X_T^*\leq R,\: X_{T}-X_0\in [0,R]\right)
    &= \Pp^z \left(X_T^*\leq R\right) - \Pp^z \left(X_T^*\leq R,\: X_{T}-X_0\in [-R,0]\right)\\
    & \geq \Pp^z \left(X_T^*\leq R\right) -q
    \geq C_2,
\end{align*}
where $C_2:= 1 - \frac{\kappa(x)}{4\gamma}-q>0$ by our choice of $\gamma$. Thus,
\begin{equation*}
    \Ee^x \left[ \I_{A_{n-1}} | \mathcal{F}_{n-1}\right] \geq C_2.
\end{equation*}
Note that $\prod_{k=0}^{n-2} \I_{A_k} $ is $\mathcal{F}_{n-1}$-measurable, and by the Markov property,
\begin{align*}
    \Ee^x\bigg( \prod_{k=0}^{n-1} \I_{A_k}\bigg)
    &= \Ee^x\bigg( \Ee^x \bigg[ \prod_{k=0}^{n-1} \I_{A_k}\:\bigg|\: \mathcal{F}_{n-1}\bigg]\bigg)
    = \Ee^x\bigg(\prod_{k=0}^{n-2} \I_{A_k} \Ee^x \bigg[ \I_{A_{n-1}}\:\bigg|\: \mathcal{F}_{n-1}\bigg]\bigg)
    \\
    &= \Ee^x\bigg( \prod_{k=0}^{n-2} \I_{A_k} \Pp^{X_{(n-1)T}} (A_{n-1})\bigg)
    \geq C_2\Ee^x\bigg( \prod_{k=0}^{n-2} \I_{A_k} \bigg).
\end{align*}
With \eqref{A5} and the fact that $\psip(x,2\xi)\leq 4\psip(x,\xi)$ we see
\begin{align*}
    \inf_{|x-y|\leq 3R} \psip(y,1/R)
    &\leq \inf_{|x-y|\leq 2R} \psip(y,1/R)
    \leq \sup_{|x-y|\leq 2R} \psip(y,1/R)\\
    &\leq \kappa (x)\inf_{|x-y|\leq 6R} \psip(y,1/R)
    \leq 4\kappa (x)\inf_{|x-y|\leq 6R}  \psip(y,1/(2R)),
\end{align*}
which implies that $u(x,2R)\leq  4 \kappa(x) u(x,R)$. Thus, by induction (recall that $T=u(x,R)/(4\gamma c)$)
\begin{equation*}
    \Pp^x (X_{ n u(x,2R)/(16\kappa(x)\gamma c)}^*\leq 2R)
    \geq  \Pp^x (X_{nu(x,R)/(4\gamma c)}^*\leq 2R)
    = \Pp^x (X_{nT}^*\leq 2R)
    \geq \Ee^x\left[ \prod_{k=0}^{n-1} \I_{A_k}\right]\geq C_2^n.
\end{equation*}

Finally, we show how \eqref{chung-e45} follows from \eqref{chung-e47}.
Put $m:= \entier{n(4\gamma c)}+1$ ($\entier{x}$ denotes the largest integer smaller or equal to $x\in\real$); then $n\cdot (4\gamma c)\leq m \leq n\cdot (4\gamma c)+1$, implying
\begin{align*}
        \Pp^x (X_{mu(x,R)}^* \leq R )
        \leq \Pp^x ( X_{n(4\gamma c)u(x,R)}^* \leq R)
        \leq C_1^n= (C_1^{\frac{n}{m}})^m
        \leq C_1^{\frac{m}{4\gamma c+1}}=:p_{1,\gamma}^m(x).
\end{align*}
For the lower bound we set $m:=\entier{n/(16\gamma \kappa(x)c)}$. Then $\frac{n}{16\gamma\kappa(x) c}-1\leq m\leq \frac{n}{16\gamma \kappa(x) c}$, and
\begin{gather*}
        \Pp^x (X_{mu(x,R)}^*\leq R) \geq \Pp^x (X_{n u(x,R)/(16\gamma\kappa(x) c)}^* \leq R)
        \geq C_2^n
        = (C_2^{\frac{n}{m+1}})^{m+1}
        \geq C_2^{(m+1)16\gamma\kappa(x) c}=: p_{2,\gamma}^{m+1}(x).
\tag*{\qedhere}
\end{gather*}
\end{proof}
\begin{remark}\label{chung-70}
    Note that $p_{1,\gamma}$, $p_{2,\gamma}$ in Lemma~\ref{chung-60} depend on $x$  through $\kappa(x)$ and $\gamma\geq \max\big\{1, \,\frac{\kappa(x)}{4(1-q(x))}\big\}$. Without loss of generality we may choose them to be continuous in $x$.
\end{remark}
\begin{proof}[Proof of Theorem~\ref{chung-10}]  Fix $x\in \real$ and write $\tau^x(a) := \inf\{s\geq 0\,:\, X_s-x\notin [-a,a]\}$ for the first exit time of the process $(X_t)_{t\geq 0}$ with $X_0=x$. Then we can follow the arguments from \cite{Du74}. Note that we can replace the stationary and independent increments assumption in \cite{Du74} by the strong Markov property and the fact that the constants $p_{1,\gamma}, p_{2,\gamma}$ in \eqref{chung-e45} depend continuously on $x$. Using \eqref{chung-e45} we can prove, as in \cite{Du74}, that there exists a constant $\xi\in (0,\infty)$ such that
    \begin{equation}\label{chung-e60}
        \Pp^x\left(\sup_{2a_{2m} \leq a\leq 2a_m}  \frac{\tau^x(a)}{u(x,a) \log|\log u(x,a)|}<\xi\right) \leq e^{-m^{1/4}},\quad m\geq 1,
    \end{equation}
    where $a_m= a_m(x) $ is the solution to $u(x,a_m)= e^{-m^2}$.  Note that $a_m\to 0$ as $m\to \infty$. An application of the Borel-Cantelli lemma gives
    \begin{equation*}
        \varlimsup_{a\to 0}  \frac{\tau^x(a)}{u(x,a) \log|\log u(x,a)|} \geq \xi \qquad  (\Pp^x\text{-a.s.}).
    \end{equation*}
    Let $\ell_k$,   $k\geq 1$,  be given by $u(x,\ell_k)=e^{-k}$, $b:=- 4/\log p_{1,\gamma}$, cf.\ \eqref{chung-e45},  and set
    $$
        B_k:= \big\{\omega\,:\,\tau^x(\ell_k,\omega)\geq b u(x,\ell_k) \log|\log u(x,\ell_k)|\big\}.
    $$
    By the upper bound in \eqref{chung-e45} we get
    $$
        \Pp(B_k) \leq  e^{-4 \log|\log u(x,\ell_k)|}\leq \frac{1}{k^4}, \quad k\geq 1,
    $$
    which implies, by the Borel-Cantelli lemma,
    \begin{equation*}
        \varlimsup_{k\to \infty} \frac{\tau^x(\ell_k)}{u(x,\ell_k) \log|\log u(x,\ell_k)|}\leq b.
    \end{equation*}
    Note that by definition the functions $u(x,a)$ and $\tau^x (a)$ are monotone increasing in $a$, and by definition we have $u(x,\ell_{k+1}) =e^{-1} u(x,\ell_k)$.  Therefore,
    \begin{align*}
        \tilde{B}_k
        := \left\{\sup_{\ell_{k+1}\leq a\leq \ell_k} \frac{\tau^x(a)}{u(x,a)\log|\log u(x,a)|}\geq b\right\}
        &\subset \big\{\tau^x(\ell_k) \geq b u(x,\ell_{k+1})\log|\log u(x,\ell_{k+1})|\big\}\\
        &= \big\{\tau^x(\ell_k) \geq b e^{-1} u(x,\ell_k)\log|\log  u(x,\ell_{k+1})|\big\},
    \end{align*}
    implying, by the upper estimate in \eqref{chung-e45}, that  $\Pp(\tilde{B}_k)\leq e^{- 4e^{-1} \log(k+1)}= (k+1)^{-4/e}$.
    Thus,
    \begin{equation}\label{chung-e71}
        \varlimsup_{\ell\to 0} \frac{\tau^x(\ell)}{u(x,\ell) \log|\log u(x,\ell)|}\in [\xi, b].
    \end{equation}

    The expression on the left-hand side of \eqref{chung-e71} belongs to $\mathcal{F}_{0+}$. By the Blumenthal $0$-$1$ law the $\sigma$-algebra $\mathcal{F}_{0+}$ is trivial, implying that there exists a constant $C$ such that
    \begin{equation}\label{chung-e65}
        \varlimsup_{a\to 0} \frac{\tau^x(a) }{u(x,a) \log| \log u(x,a)|} =C  \qquad (\Pp^x\text{-a.s.}).
    \end{equation}
    This constant is the supremum of all $\xi$ such that \eqref{chung-e60} holds. On the other hand, \eqref{chung-e65} is equivalent to
    \begin{equation*}
        \varliminf_{a\to 0} \frac 1a\,\sup_{s\leq u(x,a)\log|\log u(x,a)|}|X_s - x| = C'
        \qquad(\Pp^x\text{-a.s.})
    \end{equation*}
    for some constant $0<C'<\infty$. Substituting $a:= u^{-1}(x,t)$, we get  \eqref{chung-e10}.
\end{proof}

\section{On the upper bound}\label{sec-upper}

In this section we prove \eqref{intro-e02}, that is we give conditions which ensure that there is a norming function $v(t,x)$ with $\varlimsup_{t\to 0} \sup_{s\leq t}|X_s-x|/v(t,x)=0$  $\Pp^x$-a.s.; for a L\'evy process we also obtain conditions ensuring $\varlimsup_{t\to 0} \sup_{s\leq t}|X_s-x|/v(t,x)=\infty$ $\Pp^0$-a.s. For this we adapt Khintchine's criterion \eqref{intro-e50}.
\begin{proposition}\label{upper-10}
    Let $(X_t)_{t\geq 0}$ be a one-dimensional Feller process with symbol $p(x,\xi)$, satisfying \eqref{A1}.  If $v(x,t)\geq 0$ is a function such that $t\mapsto v(x,t)$ is monotone increasing for every $x$ and
    \begin{equation}\label{upper-e10}
        \int\limits_{0+}  \sup_{|y-x|\leq v(x,t)} \psip\big(y, \tfrac{1}{ v(x,t)}\big)\,dt < \infty,
    \end{equation}
    then
    \begin{equation}\label{upper-e12}
        \lim_{t\to 0} \frac{\sup_{s\leq t}|X_s-x|}{v(x,t)}=0
        \qquad(\Pp^x\text{-a.s.}).
    \end{equation}
\end{proposition}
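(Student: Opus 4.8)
\emph{Proof strategy.} This is a one-sided, $\varlimsup$-type statement, so the natural route is a first Borel--Cantelli argument of the same flavour as Khintchine's criterion \eqref{intro-e50}, with the maximal inequality \eqref{chung-e31} of Lemma~\ref{chung-30} supplying the exit probability bound. Fix $x\in\real$. If $v(x,0+)>0$ the claim is trivial, because $\sup_{s\leq t}|X_s-x|\to 0$ $\Pp^x$-a.s.\ by right-continuity while $v(x,t)$ stays bounded below; so assume $v(x,t)\downarrow 0$ as $t\downarrow 0$. We may further assume that $v(x,\cdot)$ takes values in $\{2^{-k}:k\in\integer\}$: replacing $v(x,\cdot)$ by its largest such minorant $\underline v(x,\cdot)$ (so $\underline v\leq v\leq 2\underline v$) only increases the left-hand side of \eqref{upper-e12}, while \eqref{upper-e10} stays true for $\underline v$ — indeed $B(x,\underline v(x,t))\subseteq B(x,v(x,t))$ and $1/\underline v(x,t)\leq 2/v(x,t)$, so by $\psip(y,2\xi)\leq 4\psip(y,\xi)$ the integrand is multiplied by at most $4$. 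Finally, since both $t\mapsto v(x,t)$ and $t\mapsto\sup_{s\leq t}|X_s-x|$ are non-decreasing, it suffices to prove $\varlimsup_{t\to0}\sup_{s\leq t}|X_s-x|/v(x,t)\leq\epsilon$ $\Pp^x$-a.s.\ for each fixed $\epsilon\in(0,1)$; intersecting over $\epsilon=1/k$ then yields \eqref{upper-e12}.

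\medskip

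Discretise along $t_n:=2^{-n}$. For $t\in[t_{n+1},t_n]$, monotonicity gives
\[
    \frac{\sup_{s\leq t}|X_s-x|}{v(x,t)}\;\leq\;\frac{\sup_{s\leq t_n}|X_s-x|}{v(x,t_{n+1})},
\]
so, by the first Borel--Cantelli lemma, it is enough to show $\sum_n\Pp^x(E_n)<\infty$ for $E_n:=\{\sup_{s\leq t_n}|X_s-x|\geq\epsilon\, v(x,t_{n+1})\}$; once only finitely many $E_n$ occur the above ratio is $<\epsilon$ for all small $t$. By \eqref{chung-e31}, $\Pp^x(E_n)\leq c\,t_n\sup_{|x-y|\leq\epsilon v(x,t_{n+1})}\psip(y,1/(\epsilon v(x,t_{n+1})))$; since $\epsilon<1$ shrinks the ball while $\psip(y,1/(\epsilon v))\leq 4^{\lceil\log_2(1/\epsilon)\rceil}\psip(y,1/v)$ by the doubling property, this gives $\Pp^x(E_n)\leq c_\epsilon\, t_n\, f(t_{n+1})$, with $f(t):=\sup_{|x-y|\leq v(x,t)}\psip(y,1/v(x,t))$ the integrand of \eqref{upper-e10} and $c_\epsilon$ depending only on $\epsilon$ and the sector constant. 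Thus everything reduces to showing $\sum_n 2^{-n} f(2^{-n-1})<\infty$ given $\int_{0+}f(t)\,dt<\infty$.

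\medskip

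This last implication is the only real work. For a L\'evy process $\psip(y,\cdot)$ is independent of $y$, so $f(t)=\psip(1/v(x,t))$ is \emph{non-increasing} in $t$ (because $\xi\mapsto\psip(\xi)$ is non-decreasing and $t\mapsto 1/v(x,t)$ is non-increasing); hence $2^{-n-2}f(2^{-n-1})\leq\int_{2^{-n-2}}^{2^{-n-1}}f(t)\,dt$, and summing yields $\sum_n 2^{-n}f(2^{-n-1})\leq 4\int_{0+}f(t)\,dt<\infty$. In the genuinely Feller case the spatial dependence of $\psip$ can destroy the monotonicity of $f$; here one uses the reduction of the first paragraph: $v(x,\cdot)$ being a non-decreasing step function with values $2^{-k}$, the integrand $f$ is the constant $h_k:=\sup_{|x-y|\leq 2^{-k}}\psip(y,2^k)$ on each maximal interval $I_k$ on which $v(x,\cdot)\equiv 2^{-k}$, these $h_k$ satisfy $h_{k+1}\leq 4h_k$, and one compares $\sum_n 2^{-n}f(2^{-n-1})$ with $\int_{0+}f(t)\,dt=\sum_k h_k\,|I_k|$ band by band (grouping the dyadic times $2^{-n-1}$ lying in each $I_k$), using the monotonicity of $\xi\mapsto\psip(y,\xi)$, the doubling property again, and the continuity of $\psip$ in the state variable inherited from \eqref{A1}. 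I expect this band-by-band estimate to be the main obstacle: since $f$ depends simultaneously on the \emph{shrinking} localisation ball $B(x,v(x,t))$ and the \emph{growing} frequency $1/v(x,t)$, it is not monotone and not controlled pointwise by the integral, so matching the series to $\int_{0+}f$ requires the discretisation and the doubling estimates to be dovetailed rather carefully.
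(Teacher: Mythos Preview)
Your route is the paper's route: apply the maximal inequality \eqref{chung-e31}, run a first Borel--Cantelli argument along the dyadic sequence $t_n=2^{-n}$, and then use the stability of \eqref{upper-e10} under $v\mapsto\lambda v$ (via $\psip(y,\xi/\lambda)\le\lambda^{-2}\psip(y,\xi)$) to upgrade $\varlimsup\le 1$ to $\varlimsup=0$. The paper organises this as ``first prove $\le 1$, then rescale''; you fold the $\epsilon$ into the event from the outset and intersect over $\epsilon=1/k$. That difference is cosmetic, as is your preliminary replacement of $v$ by its dyadic minorant $\underline v$ (which the paper does not do, but which is harmless for the reasons you give).

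Where you diverge from the paper is in taking seriously the passage from $\int_{0+}f(t)\,dt<\infty$ to $\sum_n t_n f(t_{n+1})<\infty$ when $f(t)=\sup_{|y-x|\le v(x,t)}\psip\big(y,1/v(x,t)\big)$ is \emph{not} monotone. The paper disposes of this in one line, asserting that the dyadic sum is ``up to a multiplicative constant, the lower integral sum'' for \eqref{upper-e10} --- which is exactly the $f$ non-increasing (L\'evy) case that you already handle cleanly. Your band-by-band programme for the Feller case is a natural attempt, but you stop at ``I expect this \ldots\ to be the main obstacle'' without producing the bound, and the ingredients you isolate ($h_{k+1}\le 4h_k$ and $\int f=\sum_k h_k|I_k|<\infty$) are not sufficient on their own: grouping the dyadic times by the band $I_k$ they fall in leads naturally to $\sum_k h_k b_k$ with $b_k$ the right endpoint of $I_k$, and one can have $h_k\equiv 1$, $\sum_k h_k|I_k|<\infty$ yet $\sum_k h_k b_k=\infty$ (take $b_k=1/(k+1)$). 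In that toy case the sum $\sum_n 2^{-n}f(2^{-n-1})$ is of course still finite because $f$ is bounded, but it shows your bookkeeping must track which $I_k$ actually contain dyadic sample points rather than bounding crudely by $b_k$. So at this precise spot your argument and the paper's are equally unfinished; you have correctly located the difficulty the paper elides, but not yet closed it.
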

\begin{proof}
    Under our assumptions, the process $(X_t)_{t\geq 0}$ satisfies the maximal inequality \eqref{chung-e31}. As before, we write $X_t^* := \sup_{s\leq t}|X_t - x|$ to simplify notation.

    We will use the (easy direction of the) Borel--Cantelli lemma. Fix some $h\ll 1$ and set $t_k:= h/2^k$.  Then $t_k-t_{k+1}=h/2^{k+1}=t_k/2$. By \eqref{upper-e10} and \eqref{chung-e31} we have
    $$
        \sum_{k=1}^\infty  \Pp^x \big(X_{t_k}^* >v(x,t_k)\big)
        \leq c \sum_{k=1}^\infty t_k \sup_{|y-x|\leq v(x,t_k)} \psip\left(y, \tfrac{1}{v(x,t_k)}\right)
        <\infty.
    $$
    The sum on the right is, up to a multiplicative constant, the lower integral sum for the integral \eqref{upper-e10}. Thus, $\Pp^x \big(X_{t_k}^* \leq v(x,t_k) \text{\ \ for finally all\ \ } k\geq 1\big)=1$.

    Pick $\theta_k\in [t_{k+1},t_k)$. Since $v(x,t)$ is increasing in $t$, we have
    $$
        \Pp^x \big(X_{\theta_k}^*>v(x,\theta_k)\big)
        \leq \Pp^x\big(X_{\theta_k}^*>v(x,t_{k+1})\big)
        \leq c \, \theta_k \sup_{|y-x|\leq v(x,t_{k+1})} \psip\left(y, \tfrac{1}{ v(x,t_{k+1})}\right).
    $$
    Since $\theta_k \leq t_k = 2 t_{k+1}$ we see
    $$
        \sum_{k=1}^\infty \Pp^x \big(X_{\theta_k}^*>v(x,\theta_k)\big)
        <\infty.
    $$
    By the Borel--Cantelli lemma, $\Pp^x\big( X_{\theta_k}^* \leq  v(x,\theta_k) \text{\ \ for finally all\ \ } k\geq 1\big)=1$, implying
    \begin{equation}\label{upper-e14}
        \varlimsup_{t\to 0} \frac{X_t^*}{v(x,t)}\leq 1 \qquad(\Pp^x\text{-a.s.}).
    \end{equation}
    From the definition of $\psip(y,\xi)$, we find $\psip(y, \xi/\lambda) \leq \lambda^{-2} \psip(y,\xi)$ for all $0<\lambda<1$. Thus, \eqref{upper-e10} implies
    \begin{align*}
        \int\limits_{0+}  \sup_{|y-x|\leq \lambda v(x,t)} \psip\big(y, \tfrac{1}{ \lambda v(x,t)}\big)\,dt
        \leq \frac 1{\lambda^2} \int\limits_{0+}  \sup_{|y-x|\leq  v(x,t)} \psip\big(y, \tfrac{1}{  v(x,t)}\big)\,dt
        <\infty.
    \end{align*}
    Because of \eqref{upper-e14} we get
    $$
        \frac{1}{\lambda} \cdot  \varlimsup_{t\to 0} \frac{X_t^*}{v(x,t)}
        = \varlimsup_{t\to 0} \frac{X_t^*}{\lambda v(x,t)}
        \leq 1
        \qquad(\Pp^x\text{-a.s.}).
    $$
    Letting $\lambda\to 0$ gives \eqref{upper-e12}.
\end{proof}

\begin{example}\label{upper-20}
    Suppose that $0<c\leq p(y,\xi)/p(x,\xi)\leq C<\infty$ for all $\xi\in \real$, $|x-y|\leq r$ where $r\ll 1$ is sufficiently small. Then it is enough to check the convergence of the integral
    $$
        \int\limits_{0+} \psip\left(x,\tfrac{1}{v(x,t)}\right) dt < \infty.
    $$
    This integral converges, for example, for functions $v(x,t)$ of the following type
    $\displaystyle
        v(x,t)=\frac{1}{\chi\big(x, \frac 1{t \ell_{\epsilon,n}(t)}\big)}
    $,
    where $\chi(x,\cdot):= [\psip(x,\cdot)]^{-1}$ is the inverse of $\psip(x,\cdot)$, and
    $$
    \ell_{\epsilon,n}(t)
    = |\log t| \cdot |\log |\log t|| \cdot \ldots \cdot \big(\underbrace{\log |\log |  \ldots |\log t |\ldots | }_{n}\big)^{1+\epsilon}
    $$
    for some $\epsilon>0$ and $n\geq 1$.
\end{example}

\begin{example}\label{upper-30}
    Consider the stable-like L\'evy measure from Example~\ref{chung-20}. Since  $\psip(x,\xi)= |\xi|^{\alpha(x)}$, we have an explicit representation of the function $v(x,t)$ from the previous Example \ref{upper-20}:
    \begin{equation}
        v(x,t)= \left( t\ell_{\epsilon,n}(t)\right)^{1/\alpha(x)}.\label{vtx}
    \end{equation}
    Therefore, the integral \eqref{upper-e10} becomes
    \begin{equation}\label{upper-e24}
        \int\limits_{0+} \sup_{|x-y|\leq v(x,t)} \left(\frac{1}{t \ell_{\epsilon,n}(t)}\right)^{\frac{\alpha(y)}{\alpha(x)}} dt <\infty.
    \end{equation}
    Note that $v(x,t)\to 0$ as $t\to 0$.  Since $\alpha$ is continuously differentiable, we can take $t$ so small that in the interval $(x-v(t,x),x+v(t,x))$ there is at most one extremum of $\alpha(y)$. If $\alpha$ has a local maximum at $x$, then the integrand in \eqref{upper-e24} is equal to $\big(t\ell_{\epsilon,n}(t)\big)^{-1}$. Otherwise, $x$ may be a local minimum, or $\alpha'(y)>0$ (respectively, $<0$) on $(x-v(t,x),x+v(t,x))$. In both cases the maximum of $\alpha$ is attained at the end-points of the interval, say, at $x-v(t,x)$.  Using a Taylor expansion, we have
    $$
    \alpha(x- v(x,t))\leq \alpha(x)+ |\alpha' (x-\theta v(x,t))| \,v(t,x),
    $$
    where $\theta=\theta(t,x)\in (0,1)$, implying
    \begin{align*}
        \left(\frac{1}{t \ell_{\epsilon,n}(t)}\right)^{\frac{\alpha(x- v(x,t))}{\alpha(x)}}
        &\leq \frac{1}{t \ell_{\epsilon,n}(t)} \left(\frac{1}{t \ell_{\epsilon,n}(t)}\right)^{\frac{|\alpha'(x- \theta v(t,x))|}{\alpha(x)}\,v(x,t)}\\
        &= \frac{1}{t \ell_{\epsilon,n}(t)} \left(\frac 1{v(x,t)^{v(x,t)}}\right)^{|\alpha'(x- \theta v(t,x))|}
        \leq \frac{C(x)}{t \ell_{\epsilon,n}(t)}
    \end{align*}
    for small $t>0$, where we used that $ v(x,t)^{v(x,t)}\geq 1/2$.

    Thus, in this case Proposition~\ref{upper-10} holds true with $v(x,t)$ as in \eqref{vtx}.
\end{example}

Let us show the counterpart of Proposition~\ref{upper-10}, i.e.\ a condition for $\varlimsup_{t\to 0} |X_t-x|/v(x,t)\geq C$. For this we have to use the direction of the Borel--Cantelli lemma that requires independence. Therefore, we have to restrict ourselves to L\'evy processes. The following proposition appears, with a different proof, already in \cite[Proposition 2.1]{Sa09}, see also \cite[Chapter 7]{Doney}.
\begin{proposition}\label{upper-40}
    Let $(X_t)_{t\geq 0}$ be a pure jump L\'evy process with L\'evy triplet $(0,0,\nu)$ and $v(t)$ be a positive increasing function. If
    \begin{equation}\label{upper-e40}
        \int\limits_{0+}  \nu \big\{y\,:\, |y|\geq 2 C v(t)\big\}\,dt
        =\infty
        \text{\ \ for some\ \ } C>0,
    \end{equation}
    then
    \begin{equation}\label{upper-e42}
        \varlimsup_{t\to 0} \frac{\sup_{s\leq t}|X_s|}{v(t)}
        \geq
        \varlimsup_{t\to 0} \frac{|X_t|}{v(t)}
        \geq
        \frac C3
        \qquad(\Pp^0\text{-a.s.}).
    \end{equation}
\end{proposition}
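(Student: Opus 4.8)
The plan is to use the Borel--Cantelli lemma in its independence-requiring direction, which is why we restrict to L\'evy processes. First I would reduce the statement about $|X_t|$ to a statement about a single large jump. Fix the constant $C>0$ for which \eqref{upper-e40} holds, and choose a sequence $t_k\downarrow 0$, say $t_k = \rho^k$ for a suitable $\rho\in(0,1)$, such that the dyadic-type blocks $(t_{k+1},t_k]$ are comparable in length to $t_k$. On each block consider the event $E_k$ that $(X_t)$ has a jump of size at least $2Cv(t_k)$ during the time interval $(t_{k+1},t_k]$. Since the jumps of a L\'evy process form a Poisson point process on $(0,\infty)\times(\real\setminus\{0\})$ with intensity $dt\otimes\nu(dy)$, the events $E_k$ are \emph{independent} across disjoint time blocks, and
\[
    \Pp^0(E_k)
    = 1 - \exp\Big(-(t_k-t_{k+1})\,\nu\{y:\,|y|\ge 2Cv(t_k)\}\Big).
\]
Comparing the series $\sum_k (t_k-t_{k+1})\,\nu\{|y|\ge 2Cv(t_k)\}$ with the divergent integral \eqref{upper-e40} (using that $v$ is increasing, so $\nu\{|y|\ge 2Cv(t)\}$ is monotone on each block and the sum is, up to a constant, an integral sum for \eqref{upper-e40}) shows this series diverges; hence $\sum_k \Pp^0(E_k) = \infty$ and, by the second Borel--Cantelli lemma, $E_k$ occurs for infinitely many $k$, $\Pp^0$-a.s.

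Next I would convert ``infinitely many large jumps'' into the lower bound in \eqref{upper-e42}. Suppose $E_k$ occurs: there is a jump time $s\in(t_{k+1},t_k]$ with $|\Delta X_s|\ge 2Cv(t_k)\ge 2Cv(s)$ (monotonicity of $v$). Then
\[
    |\Delta X_s| = |X_s - X_{s-}| \le |X_s| + |X_{s-}|,
\]
so at least one of $|X_s|$, $|X_{s-}|$ is $\ge Cv(t_k) \ge Cv(s)$. In either case, because $\sup_{r\le t_k}|X_r| \ge \max(|X_s|,|X_{s-}|)$ and $s\le t_k$, we get $\sup_{r\le t_k}|X_r|\ge C v(t_k)$, and letting $k\to\infty$ this already yields $\varlimsup_{t\to0}\sup_{s\le t}|X_s|/v(t)\ge C$. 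To obtain the sharper bound $C/3$ for $|X_t|$ itself (not just the running supremum), I would combine the large jump with a smallness estimate on the rest of the path: on the block $(t_{k+1},t_k]$ the ``remaining'' process (with that one big jump removed, or equivalently controlled via a maximal inequality such as \eqref{chung-e31} together with the fact that $v(t_k)$ is, by \eqref{upper-e40}, not too small relative to $\psip$) has running maximum $o(v(t_k))$, or at worst $\le \tfrac13 v(t_k)$, along a suitable subsequence; then at the time $t_k$ (or at a time just after the jump), $|X_{t_k}|$ is at least $2Cv(t_k) - \tfrac13\cdot(\text{small}) \ge \tfrac{C}{3} v(t_k)$. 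The factor $3$ is exactly the slack one loses in balancing ``big jump of size $\ge 2Cv$'' against ``two error terms each $\le \tfrac{?}{} v$''.

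The main obstacle is this last balancing step: turning the a.s.\ occurrence of arbitrarily large jumps into a statement about $|X_t|$ at a \emph{deterministic-in-$k$} time $t_k$ rather than at the random jump time. One clean way around it is to note that one only needs $|X_t|\ge \tfrac{C}{3}v(t)$ for \emph{some} sequence $t\to 0$; taking $t$ to be the jump time $s$ itself (or $s$ plus an infinitesimal), and bounding $|X_{s-}|$ by the running supremum of $X$ over $(t_{k+1},s)$, which one controls by a separate Borel--Cantelli argument (easy direction, using \eqref{chung-e31} or stationarity and a union bound) showing $\sup_{r\le t_k}|X_r - \text{(contribution of jumps }\ge \delta v(t_k))| \le \tfrac{C}{3}v(t_k)$ eventually --- more precisely, removing jumps larger than $v(t_k)$ and using that $\int_{0+}\psip(x,1/v(t))\,dt<\infty$ would have to be \emph{assumed or derived} from \eqref{upper-e40}, which it is not in general. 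The honest route, therefore, is the one in \cite{Sa09}: work directly with the point process of jumps, isolate on each block the single largest jump, and show the contribution of all other jumps in the block is $o(v(t_k))$ along a subsequence by a variance/truncation estimate, which is where the constant gets pinned down. I would follow that computation, keeping track only of the constant $3$, and invoke the Blumenthal $0$--$1$ law at the end to legitimize passing from $\varlimsup\ge C/3$ $\Pp^0$-a.s.\ to the stated almost sure identity.
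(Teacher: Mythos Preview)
Your approach differs from the paper's. The paper does not touch the Poisson point process or the second Borel--Cantelli lemma; it applies Etemadi's maximal inequality to get
\[
3\,\Pp^0\big\{|X_t|\geq \tfrac{C}{3}v(t)\big\}\;\geq\;\Pp^0\big\{\textstyle\sup_{s\leq t}|X_s|\geq Cv(t)\big\}\;\geq\;1-e^{-t\,\nu\{|y|\geq 2Cv(t)\}},
\]
and then shows that Khintchine's integral $\int_{0+}t^{-1}\Pp^0\{|X_t|\geq \tfrac{C}{3}v(t)\}\,dt$ diverges, so that the criterion \eqref{intro-e50} yields the conclusion. This is very short once Etemadi and Khintchine are granted, and it explains where the factor~$3$ comes from.

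Your jump-event route on geometric blocks is correct for the running supremum and in fact gives the sharper bound $\varlimsup_{t\to 0}\sup_{s\leq t}|X_s|/v(t)\geq C$. Where you go astray is in the passage to $|X_t|$: the ``balancing step'', the variance/truncation estimates, the appeal to \cite{Sa09}, and the Blumenthal $0$--$1$ law are all unnecessary. Simply note that if $s$ is a jump time with $|\Delta X_s|\geq 2Cv(s)$, then either $|X_s|\geq Cv(s)$, or $|X_{s-}|\geq Cv(s)$; in the second case choose $r\uparrow s$ and use $|X_r|\to|X_{s-}|$ together with $v(r)\leq v(s)$ to obtain $|X_r|/v(r)\geq C-\epsilon$. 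Either way you exhibit a time at which $|X_\cdot|/v(\cdot)\geq C-\epsilon$, and since your Borel--Cantelli argument supplies infinitely many such jump times $s\to 0$ a.s., you conclude $\varlimsup_{t\to 0}|X_t|/v(t)\geq C$---stronger than the paper's $C/3$. The obstacle you flag is therefore illusory; no control on the ``rest of the path'' is needed, and your proof closes with this one-line observation.
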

\begin{proof}
Applying the Etemadi's inequality, cf.\ Billingsley \cite[Theorem 22.5]{billingsley}, we get
\begin{align}\label{lower1}
    3\Pp \big\{|X_t|\geq \tfrac{C}{3} v(t)\big\}
    &\geq  \Pp \Big\{\sup_{s\leq t} |X_s|\geq C v(t)\Big\}
    \geq  1- e^{-t \nu\{y\,:\, |y|\geq 2Cv(t)\}}.
\end{align}
Let now $v(t)$ be such that \eqref{upper-e40} holds true.
There are two possible cases.

\emph{Case 1:} $\varlimsup_{t\to 0} t \nu\{y\,:\, |y|\geq 2Cv(t)\}=0$. Using the inequality $1-e^{-x}\geq c_1 x$ for small $x>0$, we get with \eqref{upper-e40}
$$
    \int\limits_{0+} \frac1t\,\Pp \big\{|X_t|\geq \tfrac{C}{3} v(t)\big\}\, dt
    \geq c_1 \int\limits_{0+} \nu\big\{y\,:\, |y|\geq 2Cv(t)\big\}\,dt
=\infty.
$$

\emph{Case 2:} $\varliminf_{t\to 0} t \nu\{y\,:\, |y|\geq 2Cv(t)\}=c_2>0$. Then
$$
\varliminf\limits_{t\to 0} \left(1- e^{-t \nu\{y\,:\, |y|\geq 2Cv(t)\}}\right)=1- e^{-\varliminf\limits_{t\to 0} t \nu\{y\,:\, |y|\geq 2Cv(t)\}} =1-e^{-c_2}\in (0,1].
$$
 Thus, there exists $t_0$ small enough such that $\Pp \big\{|X_t|\geq \tfrac{C}{3} v(t)\big\}\geq c_3>0$ for all $t\in (0,t_0]$ and we have automatically  $\int\limits_{0+} \frac1t\, \Pp \big\{|X_t|\geq \frac{C}{3} v(t)\big\}\, dt=\infty$.
\end{proof}

\section{LIL results via the symbol of the process}\label{sec-symbol}

In this section we obtain a Chung-type $\liminf$ LIL \eqref{intro-e03} for a Feller process $(X_t)_{t\geq 0}$. We will see that the growth of the norming function $w(x,t)$ is determined by the symbol $p(x,\xi)$ of the process. This result extends, in particular, Proposition~\ref{upper-40}. The method we are presenting here seems to be new also for L\'evy processes, and gives a relatively simple criterion for the norming function, see Remark~\ref{symbol-20}.

Throughout we assume that \eqref{A1} holds with the following stronger version of the sector condition,
\begin{equation*}
    \exists\,c_0\in [0,1)\quad
    \forall x,\xi\in\real \::\: |\Im p(x,\xi)|
    \leq c_0\, \Re p(x,\xi).
\end{equation*}
Note that $c_0 < 1$ means that the drift does not dominate the overall behaviour of the process. For a L\'evy process this implies that a bounded variation process has no drift at all.

We need a further assumption: there exists a monotone increasing function $g$ such that
\begin{equation}\label{A7}\tag{\bfseries A4}
    g(\xi) \leq \Re\,p(x,\xi) \leq  C_p(1+|\xi|^2), \quad x\in \real, \; |\xi|\geq 1.
\end{equation}
We also need the following estimate for the characteristic function $\lambda_t(x,\xi)=\Ee^x e^{i\xi (X_t-x)}$ which is due to \cite[Proposition~2.4]{SW12}:
\begin{equation}\label{symbol-e05}
    \sup_{x\in \real} |\lambda_t(x,\xi)|\leq \exp\big[ -\delta t \inf_{x\in \real} \Re p(x,\xi)\big], \quad t\in [0,t_0],\quad t_0=t_0(\xi, \epsilon),
\end{equation}
where $\delta=\delta(c)=1-c_0-\epsilon>0$,  and $0\leq c_0<1$ is the sector constant.

\begin{remark} \label{rem-cont}
    \eqref{A7} ensures that the function $t_0(\xi,\epsilon)$ is continuous in $\xi$. This follows from the proof of \cite[Proposition~2.4]{SW12}. The upper bound in \eqref{A7} means that the generator $A=-p(x,D)$ has \emph{bounded coefficients}, cf.\ \cite{schilling-schnurr} for details; in fact, $C_p= 2 \sup_{x\in\real} \sup_{|y|\leq 1} |p(x,\eta)|$.
\end{remark}
\begin{theorem}\label{symbol-10}
    Let $(X_t)_{t\geq 0}$ be a Feller process such that $X_0 = x$ and with the symbol $p(x,\xi)$ satisfying the conditions \eqref{A1} and \eqref{A7} with a sector constant $c_0\in [0,1)$. Let $w(x,t)$, $t>0, x\in\real$, be a positive function which is for all $x$  monotone decreasing as a function of $t$. Then we have $\Pp^x$-a.s.
    \begin{equation*}
    \varliminf_{t\to 0}\frac{|X_t-x|}{w(x,t)} =
     \left\{\begin{aligned}
     &\infty\geq \gamma(x)>0,
     \\[-\medskipamount]
     &\infty
     \end{aligned}\right.
    \quad  \text{according to} \quad
    \varliminf_{t\to 0} t g\big(\tfrac 1{w(x,t)}\big) =
     \left\{\begin{aligned}
     &c(x) > 0, \\[-\medskipamount]
     &\infty.
     \end{aligned}\right.
    \end{equation*}
\end{theorem}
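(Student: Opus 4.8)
The plan is to establish both halves of the dichotomy by a Borel--Cantelli argument along a suitable geometric sequence of times, using the characteristic-function estimate \eqref{symbol-e05} to convert statements about $\Pp^x(|X_t-x| < w(x,t))$ into statements about $t\,g(1/w(x,t))$. The key analytic input is a two-sided control of the small-ball probability $\Pp^x(|X_t - x| \leq r)$ in terms of $\Re p$. For the lower bound on the probability (needed for the divergent case, where we want $\varliminf = \infty$), one integrates the Fourier inversion formula: for a smooth cutoff $\phi$ supported near the origin with $\phi \geq \I_{[-1,1]}$, one has $\Pp^x(|X_t - x| \leq r) \leq \int \phi\big(\tfrac{X_t-x}{r}\big)\,d\Pp^x = r\int \hat\phi(r\xi)\,\lambda_t(x,\xi)\,d\xi$, and then \eqref{symbol-e05} together with \eqref{A7} (which guarantees $\Re p(x,\xi) \geq g(\xi)$ for $|\xi| \geq 1$ and the continuity of $t_0(\xi,\epsilon)$, cf.\ Remark~\ref{rem-cont}) yields an upper bound of the form $\Pp^x(|X_t-x|\leq r) \leq c\exp[-\delta t\, g(c/r)]$ for $t$ small, $r$ small. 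Symmetrically, a lower bound $\Pp^x(|X_t - x| \leq r) \geq c'$ whenever $t\,g(c''/r)$ stays bounded is obtained by choosing a test function with $\hat\psi \geq 0$ supported in $[-1,1]$ and $\psi \geq 0$, writing $\Pp^x(|X_t-x|\leq r) \geq \int \psi\big(\tfrac{X_t-x}{r}\big) d\Pp^x / \|\psi\|_\infty$ and bounding $|\lambda_t(x,\xi) - 1|$ via the generator; here one uses the upper bound $\Re p \leq C_p(1+|\xi|^2)$ from \eqref{A7}. Both steps are essentially contained in the circle of ideas around \cite{SW12} and \cite{Sch98}.

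Once these two estimates are in hand, the argument runs as follows. Suppose first $\varliminf_{t\to 0} t\,g(1/w(x,t)) = \infty$. Fix any $\gamma>0$. For $t$ small, $t\,g(1/w(x,t)) \geq$ any prescribed level, so $t\,g\big(1/(\gamma w(x,t))\big)$ is also eventually large (using monotonicity of $g$ and of $w(x,\cdot)$ one controls the effect of the constant $\gamma$ by a comparison $g(1/(\gamma w)) \geq c_\gamma g(1/w)$ argument, or simply by replacing $g$ by $t\mapsto t g(1/w(x,t))\to\infty$ directly). Along $t_k := 2^{-k}$ one gets $\sum_k \Pp^x(|X_{t_k} - x| \leq \gamma w(x,t_k)) \leq c\sum_k \exp[-\delta t_k g(c/(\gamma w(x,t_k)))] < \infty$, so by Borel--Cantelli, $|X_{t_k}-x| > \gamma w(x,t_k)$ eventually; interpolating to all $t\in[t_{k+1},t_k)$ using monotonicity of $w(x,\cdot)$ gives $\varliminf_{t\to 0}|X_t-x|/w(x,t) \geq \gamma$, and letting $\gamma\to\infty$ gives the $\infty$ case. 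Conversely, if $\varliminf_{t\to 0} t\,g(1/w(x,t)) = c(x) \in (0,\infty)$, then along a subsequence $t_k\downarrow 0$ one has $t_k g(1/w(x,t_k))$ bounded; applying the lower estimate $\Pp^x(|X_{t_k}-x| \leq w(x,t_k)) \geq c' > 0$ and a second-Borel--Cantelli / Blumenthal $0$--$1$ law argument (the event $\{\varliminf |X_t-x|/w(x,t) \leq \gamma\}$ is in $\Ff_{0+}$), one deduces that $\varliminf_{t\to 0}|X_t-x|/w(x,t) \leq C$ for some finite $C$, hence the $\liminf$ equals a finite constant $\gamma(x)>0$; positivity follows because $\Pp^x(|X_t-x|\leq r) \to 0$ as $r\to 0$ faster than any fixed rate once $t g(1/r)\to\infty$, which rules out $\gamma(x)=0$ by the same summability argument applied with an arbitrarily small constant.

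The main obstacle is the lower bound $\Pp^x(|X_t-x| \leq r) \geq c' > 0$ in the bounded case: unlike the L\'evy situation, there is no spatial homogeneity, so $\lambda_t(x,\xi)$ is genuinely not $e^{-tp(x,\xi)}$, and one must control $|\lambda_t(x,\xi)-1|$ uniformly. I expect this to be handled by a Dynkin-formula / resolvent estimate: $\lambda_t(x,\xi) - 1 = \Ee^x\big[e^{i\xi(X_t-x)} - 1\big]$ and, since $C_c^\infty \subset D(A)$ and $A = -p(x,D)$ has bounded coefficients by \eqref{A7}, one gets $|\Ee^x[u(X_t)] - u(x)| \leq t\|Au\|_\infty$ for $u$ a localized version of $e^{i\xi\,\cdot}$, producing $|\lambda_t(x,\xi)-1| \leq C t(1+\xi^2)$ plus a boundary error from the localization that is controlled because $X_t$ is close to $x$ with high probability by \eqref{chung-e31}. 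Putting the pieces together and keeping the dependence on the constants uniform in $x$ along the fixed trajectory is the only delicate bookkeeping; the rest is a routine Borel--Cantelli/$0$--$1$-law packaging as in the proof of Theorem~\ref{chung-10}.
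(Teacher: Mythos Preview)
Your approach is genuinely different from the paper's, and the main issue is a misreading of what Case~1 claims. The theorem only asserts $\gamma(x)>0$ in Case~1---it does \emph{not} claim $\gamma(x)<\infty$ (note the ``$\infty\geq\gamma(x)>0$'' in the display). Consequently, the hardest piece of your plan---the lower bound $\Pp^x(|X_t-x|\leq r)\geq c'$ and the attendant Dynkin-formula argument you flag as ``the main obstacle''---is addressing a claim that is not being made. Both cases reduce to one-sided statements: if $\varliminf_{t\to 0} t\,g(1/w)>0$, conclude $\varliminf_{t\to 0}|X_t-x|/w>0$; if the former equals $\infty$, conclude the latter equals $\infty$. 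The second Borel--Cantelli direction and the Blumenthal 0--1 law play no role.

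The paper's proof is a short direct contradiction argument, with no Borel--Cantelli, no small-ball probability estimates, and no 0--1 law. For fixed $1<a<b$ one looks at $\big|\Ee^x\int_a^b e^{i\xi(X_t-x)/w(x,t)}\,d\xi\big|$. By \eqref{symbol-e05} (applied on the compact interval $[a,b]$, where $\inf_{\xi\in[a,b]}t_0(\xi,\epsilon)>0$ by Remark~\ref{rem-cont}) and the monotonicity of $g$, this is at most $(b-a)\exp[-\delta\,t\,g(1/w(x,t))]$ for small $t$. On the other hand, if $\varliminf|X_t-x|/w(x,t)=0$ (Case~1) or is finite along a subsequence (Case~2), one passes to the real part, uses $\cos\to 1$, and applies Fatou to obtain a lower bound $b-a$ (resp.\ a positive constant), contradicting the upper bound $(b-a)e^{-\delta c(x)}<b-a$ (resp.\ $0$).

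Your Borel--Cantelli route could in principle recover the needed one-sided conclusions from the upper bound $\Pp^x(|X_t-x|\leq r)\leq C\exp[-\delta\,t\,g(c/r)]$ alone, but there is a real technical gap in deriving that bound: Fourier inversion forces you to integrate $|\lambda_t(x,\xi)|$ over all of $\real$ (or at least over $|\xi|$ up to order $1/r\to\infty$), and \eqref{symbol-e05} carries a $\xi$-dependent threshold $t_0(\xi,\epsilon)$ that is only known to be continuous, not bounded away from zero. The paper sidesteps this by keeping $\xi$ in a fixed compact set; the averaging $\int_a^b$ and the Fatou-based contradiction replace both your small-ball estimate and your summability packaging.
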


\begin{proof}
    Take $1<a<b<\infty$. By Fubini's theorem we find
    \begin{align*}
        \left| \Ee^x \int_a^b e^{i\xi\,\frac{X_t-x}{w(x,t)}}\,d\xi\right|
        = \left| \int_a^b \lambda_t \big(x,\tfrac{\xi}{w(x,t)}\big)\,d\xi\right|
        &\leq \int_a^b \exp\big[-\delta t g(\tfrac{\xi}{w(x,t)}) \big]\,d\xi\\
        &\leq (b-a)  \exp\big[-\delta t g(\tfrac{1}{w(x,t)}) \big],
    \end{align*}
    where we  used the monotonicity of $g$ in the last estimate. This inequality holds for all $0\leq t\leq t(a,b,\epsilon)$, $t(a,b,\epsilon)=\inf_{\xi\in[a,b]} t_0(\xi,\epsilon)$ where $t_0(\xi,\epsilon)$ is the constant from \eqref{symbol-e05}. Since it depends continuously on $\xi$, cf.\ Remark~\ref{rem-cont}, we have $t(a,b,\epsilon)>0$. Taking the $\varlimsup_{t\to 0}$ on both sides, we get
    \begin{align}\label{symbol-e16}
        \varlimsup_{t\to 0} \left| \Ee^x \int_a^b e^{i\xi\,\frac{X_t-x}{w(x,t)}}\,d\xi\right|
        \leq (b-a)\exp\Big\{-\delta \varliminf_{t\to 0} \Big[t g\big(\tfrac{1}{w(x,t)}\big)\Big] \Big\}.
    \end{align}

\paragraph{Case 1}
    Assume that
    $
        \varliminf_{t\to 0} t g(1/w(x,t)) = c(x) > 0
    $.
    Then
    \begin{equation*}
        \varlimsup_{t\to 0} \left| \Ee^x \int_a^b e^{i\xi\,\frac{X_t-x }{w(x,t)}}\,d\xi\right|
        \leq (b-a) e^{-\delta c(x)}.
    \end{equation*}
   On the other hand, using $|z|\geq |\Re\, z|$, we derive
   \begin{align*}
   \left| \int_a^b e^{i\xi\,\frac{X_t-x }{w(x,t)}}\,d\xi\right|
   \geq   \left| \int_a^b \cos\left( \xi\,\frac{X_t-x }{w(x,t)}\right) d\xi\right|
   \geq   \int_a^b \cos\left( \xi\,\frac{X_t-x }{w(x,t)}\right) d\xi.
   \end{align*}
    Suppose that the claim does not hold, and $ \varliminf_{t\to 0} {|X_t-x|}/{w(x,t)}=0$. Without loss of generality, we can choose $a$ and $b$ such that  $\cos\big( \xi\,\frac{X_t-x }{w(x,t)}\big)>0$ for $a<\xi<b$. Since $\cos$ is bounded below by $-1$, we can apply Fatou's lemma and get
    $$
    \varliminf_{t\to 0}  \left| \Ee^x \int_a^b e^{i\xi\,\frac{X_t-x }{w(x,t)}}\,d\xi\right|\geq
    \Ee^x \left( \int_a^b  \varliminf_{t\to 0}   \cos\left( \xi\,\frac{X_t-x }{w(x,t)}\right) d\xi\right)= b-a.
    $$
    Thus, we arrive at  $1\leq  e^{-\delta c(x)}$, which is wrong, since $c(x)$ is strictly positive.

\paragraph{Case 2}
    Assume that
    $
         \varliminf_{t\to 0} t g(1/w(x,t)) =\infty
    $.
    From \eqref{symbol-e16} and the fact that $|\Re z| \leq |z|$ we see
    \begin{equation}\label{symbol-e17}
    0= \varliminf_{t\to 0}  \left| \int_a^b \cos\left( \xi\,\frac{X_t-x }{w(x,t)}\right) d\xi\right|.
    \end{equation}
    Assume that there exists a sequence $(t_n)_{n\geq 0}$ with $\lim_{n\to\infty} t_n= 0$ and $\lim_{n\to \infty}  {|X_{t_n}-x|}/{w(x,t_n)}=c<\infty$. Since $1<a<b<\infty$ are arbitrary, we can chose the interval $[a,b]$ in such a way that
    $$
        \int_a^b \cos\left( \xi\,\frac{X_{t_n}-x }{w(x,t_n)}\right) d\xi>\epsilon >0
        \quad\text{for all\ \ } n\geq 1
    $$
    and some $\epsilon=\epsilon(c)>0$.
    This contradicts \eqref{symbol-e17} and the proof is finished.
\end{proof}

\begin{remark}\label{symbol-20}
If the constant $c(x)$ appearing in the statement of the preceding theorem is uniformly bounded away from zero, i.e.\ $\inf_x c(x)=c>0$, then $\inf_x\gamma(x)=\gamma>0$. Indeed, assume that $\gamma=0$. Taking $\sup_x$ on both sides of \eqref{symbol-e16}  we get in the same way as above that
\begin{align*}
    (b-a)e^{-c\delta}
    \geq  \sup_x  \varlimsup_{t\to 0} \left| \Ee^x \int_a^b e^{i\xi\,\frac{X_t-x}{w(x,t)}}\,d\xi\right|
    \geq \Ee^x \left( \int_a^b \inf_x \varliminf_{t\to 0}   \cos\left( \xi\,\frac{X_t-x }{w(x,t)}\right) d\xi\right)= b-a,
\end{align*}
which contradicts to the assumption $c>0$.
\end{remark}

\begin{remark}\label{symbol-22}
If $(X_t)_{t\geq 0}$ is a symmetric L\'evy process with characteristic exponent $\psi(\xi)\geq g(\xi)\geq 0$ and a monotone increasing function $g$, Theorem \ref{symbol-10} reads    \begin{equation*}
    \varliminf_{t\to 0}\frac{|X_t|}{w(t)} =
     \left\{\begin{aligned}
     &\infty > \gamma>0,
     \\[-\medskipamount]
     &\infty
     \end{aligned}\right.
    \quad  \text{according to} \quad
    \varliminf_{t\to 0} t g\big(\tfrac 1{w(t)}\big) =
     \left\{\begin{aligned}
     &c > 0, \\[-\medskipamount]
     &\infty.
     \end{aligned}\right.
    \end{equation*}
Indeed: Now we can take $a=0$ and $b=1$ and get
$$
    \left|\Ee \left[\frac{e^{i\,\frac{X_t}{w(t)}}-1}{X_t/w(t)}\right]\right|
    =\left|\Ee\left[ \int_0^1 e^{-i\xi\,\frac{X_t}{w(t)}}\,d\xi\right]\right|
    =\Ee \left[\int_0^1 e^{-i\xi\,\frac{X_t}{w(t)}}\,d\xi\right]
    = \int_0^1 e^{- t\psi(\frac\xi{w(t)})}\,d\xi.
$$
Assume in Case 1 of the proof of Theorem \ref{symbol-10} that $\varliminf_{t\to 0} t\psi(1/w(t))= c\in (0,\infty)$ and $\varliminf_{t\to 0} |X_t|/w(t)=\infty$. Let $(t_n)_{n\geq 0}$ be a sequence decreasing to $0$ such that $\lim_{n\to\infty } t_n\psi(1/w(t_n))=c$. Then
$$
    \lim_{n\to\infty}\left|\Ee \left[\frac{e^{i\,\frac{X_{t_n}}{w(t_n)}}-1}{X_{t_n}/w(t_n)}\right]\right|
    = \lim_{n\to\infty}\int_0^1 e^{- t_n\psi(\frac\xi{w(t_n)})}\,d\xi
    = \int_0^1 \lim_{n\to\infty} e^{- t_n\psi(\frac\xi{w(t_n)})}\,d\xi
    \geq  \lim_{n\to\infty} e^{- t_n g(\frac 1{w(t_n)})}
    = e^{-c}.
$$
From the elementary estimate $|e^{i\xi}-1|\leq |\xi|$ we see that the expression on the left tends to $0$, and we have reached a contradiction also in this case. The rest of the proof applies literally.
\end{remark}








\end{document}